\newtheorem{theorem}{Theorem}
\newtheorem{corollary}{Corollary}
\newtheorem{proposition}{Proposition}
\newtheorem{remark}{Remark}
\newtheorem{condition}{Condition}
\newtheorem{lemma}{Lemma}[section]
\newcommand{\bB}{\mbox{\boldmath {$B$}}}
\newcommand{\be}{\mbox{\boldmath {$e$}}}
\newcommand{\bh}{\mbox{\boldmath {$h$}}}
\newcommand{\bH}{\mbox{\boldmath {$H$}}}
\newcommand{\bI}{\mbox{\boldmath {$I$}}}
\newcommand{\bO}{\mbox{\boldmath {$O$}}}
\newcommand{\bP}{\mbox{\boldmath {$P$}}}
\newcommand{\br}{\mbox{\boldmath {$r$}}}
\newcommand{\bS}{\mbox{\boldmath {$S$}}}
\newcommand{\bu}{\mbox{\boldmath {$u$}}}
\newcommand{\bV}{\mbox{\boldmath {$V$}}}
\newcommand{\bx}{\mbox{\boldmath {$x$}}}
\newcommand{\bX}{\mbox{\boldmath {$X$}}}
\newcommand{\by}{\mbox{\boldmath {$y$}}}
\newcommand{\bz}{\mbox{\boldmath {$z$}}}
\newcommand{\bZ}{\mbox{\boldmath {$Z$}}}
\newcommand{\bze}{\mbox{\boldmath {$0$}}}
\newcommand{\bone}{\mbox{\boldmath {$1$}}}
\newcommand{\bmu}{\mbox{\boldmath $ \mu $}}
\newcommand{\bSigma}{\mbox{\boldmath $ \Sigma $}}
\newcommand{\bLam}{\mbox{\boldmath $ \Lambda $}}
\newcommand{\bnu}{\mbox{\boldmath $ \nu $}}
\newcommand{\tr}{\mbox{tr}}
\newcommand{\var}{\mbox{var}}
\newcommand{\plim}{\mathop{\rm plim}\limits}
\long\def\symbolfootnote[#1]#2{\begingroup%
\def\thefootnote{\fnsymbol{footnote}}\footnote[#1]{#2}\endgroup}
\begin{document}

\begin{center}
\Large
{\bf Principal component analysis based clustering for high-dimension, low-sample-size data}
\end{center}
\begin{center}
\vskip 0.5cm
\textbf{\large Kazuyoshi Yata and Makoto Aoshima} \\
Institute of Mathematics, University of Tsukuba, Ibaraki, Japan \\[-1cm]
\end{center}
\symbolfootnote[0]{\normalsize Address correspondence to Makoto Aoshima, 
Institute of Mathematics, University of Tsukuba, Ibaraki 305-8571, Japan; 
Fax: +81-298-53-6501; E-mail: aoshima@math.tsukuba.ac.jp}

\begin{abstract}
In this paper, we consider clustering based on principal component analysis (PCA) for high-dimension, low-sample-size (HDLSS) data. 
We give theoretical reasons why PCA is effective for clustering HDLSS data. 
First, we derive a geometric representation of HDLSS data taken from a two-class mixture model. 
With the help of the geometric representation, we give geometric consistency properties of sample principal component scores in the HDLSS context.
We develop ideas of the geometric representation and geometric consistency properties to multiclass mixture models.
We show that PCA can classify HDLSS data under certain conditions in a surprisingly explicit way.
Finally, we demonstrate the performance of the clustering by using microarray data sets. \\
\\
{\small \noindent\textbf{Keywords:} Clustering; Consistency; Geometric representation; HDLSS; Microarray; PC score}
\end{abstract}
\section{Introduction}
High-dimension, low-sample-size (HDLSS) data situations occur in many areas of modern science such as genetic microarrays, medical imaging, text recognition, finance, chemometrics, and so on. 
In recent years, substantial work has been done on HDLSS asymptotic theory, where the sample size $n$ is fixed or $n/d\to 0$ as the data dimension $d\to\infty$. 
\citet{Hall:2005}, \citet{Ahn:2007}, \citet{Yata:2012} and \citet{Lv:2013} explored several types of geometric representations of HDLSS data. 
\citet{Jung:2009} showed inconsistency properties of the sample eigenvalues and eigenvectors in the HDLSS context.
\citet{Yata:2012} developed the noise-reduction methodology to give consistent estimators of both the eigenvalues and eigenvectors together with principal component (PC) scores in the HDLSS context. 
\citet{Hellton:2014} also gave several asymptotic properties of the sample PC scores in the HDLSS context. 
On the other hand, the asymptotic behavior of the sample eigenvalues was studied by \citet{Johnstone:2001} and several literatures in high-dimension, large sample size data situations such as $n/d \to c>0$. 

The HDLSS asymptotic theory was created under the assumption either the population distribution is Gaussian or the random variables in a sphered data matrix have a $\rho$-mixing dependency. 
However, \citet{Yata:2010} developed a HDLSS asymptotic theory without such assumptions. 
Moreover, they created a new principal component analysis (PCA) called the cross-data-matrix methodology that is applicable to constructing an unbiased estimator in HDLSS nonparametric settings. 
Meanwhile, PCA is quite popular for clustering high dimensional data. 
See Section 9.2 in \citet{Jolliffe:2002} for details. 
For clustering HDLSS gene expression data, see \citet{Armstrong:2002} and \citet{Pomeroy:2002}. 
\citet{Liu:2008} and \citet{Ahn:2012} gave binary split type clustering methods for HDLSS data.
Given this background, we decided to focus on high-dimensional structures of multiclass mixture models.
In this paper, we consider asymptotic properties of PC scores for high-dimensional mixture models to apply to cluster analysis in HDLSS settings. 
The main contribution of this paper is that we give theoretical reasons why PCA is effective for clustering HDLSS data. 

Suppose there are independent and $d$-variate populations, $\Pi_i,\ i=1,...,k$, having an unknown mean vector $\bmu_{i}$ and unknown covariance matrix $\bSigma_{i}(\ge \bO)$ for each $i$. 
We do not assume $\bSigma_{1}=\cdots=\bSigma_{k}$.
The eigen-decomposition of $\bSigma_{i}$ is given by $\bSigma_{i}=\bH_{i}\bLam_{i}\bH_{i}^T$, where $\bLam_{i}=\mbox{diag}(\lambda_{i1},...,\lambda_{id})$ having eigenvalues $\lambda_{i1}\ge \cdots \ge \lambda_{id} \ge 0$ and $\bH_{i}$ is an orthogonal matrix of the corresponding eigenvectors. 
We consider a mixture model to classify a data set into $k\ (\ge 2)$ groups. 
We assume that any sample is taken with mixing proportions $\varepsilon_i$s from $\Pi_i$s, where $\varepsilon_i\in (0,1)$ and $\sum_{i=1}^k\varepsilon_i=1$ but {\it the label of the population is missing}.
We assume that $\varepsilon_i$s are independent of $d$. 
We consider a mixture model whose probability density function (or probability function) is given by
\begin{align}
f(\bx)=\sum_{i=1}^k \varepsilon_i \pi_i(\bx; \bmu_i,\bSigma_i),
\label{1.1}
\end{align}
where $\bx\in \mathbb{R}^d$ and $\pi_i(\bx; \bmu_i,\bSigma_i)$ is a $d$-dimensional probability density function (or probability function) of $\Pi_i$ having a mean vector $\bmu_i$ and covariance matrix $\bSigma_i$. 
Suppose we have a $d\times n$ data matrix $\bX=(\bx_{1},...,\bx_{n})$, where $\bx_{j},\ j= 1,...,n$, are independently taken from (\ref{1.1}). 
We assume $n\ge k$. 
Let $n_i=\# \{j| \bx_j\in \Pi_i\ \mbox{for $j=1,...,n$} \}$ and $\eta_i=n_i/n$ for $i=1,...,k$, where $\# A$ denotes the number of elements in a set $A$. 
We assume that $n$ and $n_i$s are independent of $d$. 
Let $\bmu$ and $\bSigma$ be the mean vector and the covariance matrix of (\ref{1.1}).
Then, we have that $\bmu=\sum_{i=1}^k \varepsilon_i \bmu_i$ and $\bSigma= \sum_{i=1}^{k-1}\sum_{j=i+1}^k \varepsilon_i \varepsilon_{j}(\bmu_{i}-\bmu_{j})(\bmu_{i}-\bmu_{j})^T+\sum_{i=1}^k \varepsilon_i\bSigma_i$.
We note that $E(\bx | \bx \in \Pi_i)=\bmu_i$ and $\var(\bx | \bx \in \Pi_i)=\bSigma_i$ for $i=1,...,k$. 
We denote the eigen-decomposition of $\bSigma$ by $\bSigma=\bH\bLam \bH^T$, where $\bLam=\mbox{diag}(\lambda_{1},...,\lambda_{d})$ having eigenvalues $\lambda_{1}\ge \cdots \ge \lambda_{d}\ge 0$ and $\bH=(\bh_{1},...,\bh_{d})$ is an orthogonal matrix of the corresponding eigenvectors. 
Let $\bx_j-\bmu=\bH\bLam^{1/2}(z_{1j},...,z_{dj})^T$ for $j=1,...,n$. 
Then, $(z_{1j},...,z_{dj})^T$ is a sphered data vector from a distribution with the identity covariance matrix. 
The $i$th true PC score of $\bx_j$ is given by $\bh_i^T(\bx_j-\bmu)=\lambda_i^{1/2} z_{ij}$ (hereafter called $s_{ij}$). 
We note that $\var(s_{ij})=\lambda_i$ for all $i,j$. 
Let $\bmu_{i,j}=\bmu_i-\bmu_j$ and $\Delta_{i,j}=||\bmu_{i,j}||^2$ for $i,j=1,...,k\ (i<j)$, where $||\cdot||$ denotes the Euclidean norm. 
Let $\Delta_{\min}=\min_{1\le i<j \le k}\Delta_{i,j}$. 
We note that $\Delta_{\min}=\Delta_{1,2}$ when $k=2$. 
Since the sign of an eigenvector is arbitrary, we assume that $\bh_i^T\bmu_{i,i+1}\ge 0$ for $i=1,...,k-1$, without loss of generality.
In addition, for the largest eigenvalue $\lambda_{i1}$s, we assume the following condition as necessary:
\begin{condition}
\label{con1}
$\displaystyle \frac{\max_{i=1,...,k}\lambda_{i1}}{\Delta_{\min}}\to 0$ \ as $d\to \infty$. 
\end{condition}
 
We consider clustering $\bx_1,...,\bx_n$ into one of $\Pi_i$s in HDLSS situations. 
When $k=2$, \citet{Yata:2010} gave the following result: 
We denote the angle between two vectors $\bx$ and $\by$ by $\mbox{Angle}(\bx,\by)=\cos^{-1}\{\bx^T\by/(||\bx||\cdot ||\by||)\}$. 
Under Condition 1, it holds that as $d\to \infty$ 
\begin{align}
\frac{\lambda_1}{\varepsilon_1\varepsilon_2 \Delta_{1,2}}\to 1\quad \mbox{and} \quad \mbox{Angle}(\bh_1,\bmu_{1,2})\to 0. 
\label{1.2}
\end{align}
Furthermore, for the normalized first PC score $s_{1j}/\lambda_1^{1/2}\ (=z_{1j})$, it follows that 
\begin{eqnarray}
\plim_{d\to \infty} \frac{s_{1j} }{ \lambda_1^{1/2}}= \left\{ \begin{array}{ll}
 \sqrt{\varepsilon_2/\varepsilon_1} & \mbox{ when } \bx_j\in \Pi_1, \\[1mm]
-\sqrt{\varepsilon_1/\varepsilon_2} & \mbox{ when } \bx_j\in \Pi_2 
\end{array} \right. 
\label{1.3}
\end{eqnarray}
for $j=1,...,n$. 
Here, `$\plim$' denotes the convergence in probability.
One would be able to classify $\bx_j$s into two groups if $s_{1j}$ is accurately estimated in HDLSS situations. 

In this paper, we consider asymptotic properties of sample PC scores for (\ref{1.1}) in the HDLSS context such as $d\to \infty$ while $n$ is fixed. 
In Section 2, we first derive a geometric representation of HDLSS data taken from the two-class mixture model.
With the help of the geometric representation, we give geometric consistency properties of sample PC scores in the HDLSS context. 
We show that PCA can classify HDLSS data under certain conditions in a surprisingly explicit way. 
In Section 3, we investigate asymptotic behaviors of true PC scores for the $k\ (\ge 3)$-class mixture model and provide geometric consistency properties of sample PC scores when $k\ge 3$. 
In Section 4, we demonstrate the performance of clustering based on sample PC scores by using microarray data sets.  
We show that the real HDLSS data sets hold the geometric consistency properties.
\section{PC scores for two-class mixture model}
\subsection{Preliminary}
The sample covariance matrix is given by 
$\bS=(n-1)^{-1}(\bX-\overline{\bX})(\bX-\overline{\bX})^T=(n-1)^{-1}\sum_{j=1}^n(\bx_j-\bar{\bx}_n)(\bx_j-\bar{\bx}_n)^T$, where $\bar{\bx}_n=n^{-1}\sum_{j=1}^n \bx_j$ and $\overline{\bX}=\bar{\bx}_n\bone_n^T$ with $\bone_n=(1,...,1)^T\in \mathbb{R}^n$. 
Then, we define the $n \times n$ dual sample covariance matrix by $\bS_{D}=(n-1)^{-1}(\bX-\overline{\bX})^T(\bX-\overline{\bX})$. 
We note that $\mbox{rank}(\bS_D)\le n-1$. 
Let $\hat{\lambda}_{1}\ge\cdots\ge \hat{\lambda}_{n-1}\ge 0$ be the eigenvalues of $\bS_{D}$. 
Then, we define the eigen-decomposition of $\bS_{D}$ by $\bS_{D}=\sum_{i=1}^{n-1}\hat{\lambda}_{i}\hat{\bu}_{i}\hat{\bu}_{i}^T $, where $\hat{\bu}_{i}=(\hat{u}_{i1},...,\hat{u}_{in})^T$ denotes a unit eigenvector corresponding to $\hat{\lambda}_{i}$. 
Since the sign of $\hat{\bu}_{i}$s is arbitrary, we assume $\hat{\bu}_{i}^T\bz_i \ge 0$ for all $i$ without loss of generality, where $\bz_i$ is defined by $\bz_{i}=(z_{i1},...,z_{in})^T$. 
Note that $\bS$ and $\bS_{D}$ share the non-zero eigenvalues. 
Let $\hat{z}_{ij}=\hat{u}_{ij}n^{1/2}$ for $i=1,...,n-1;\ j=1,...,n$. 
We note that $\hat{z}_{ij}$ is an estimate of $s_{ij}/\lambda_i^{1/2}\ (=z_{ij})$ for $i=1,...,n-1;\ j=1,...,n$  from the facts that $\hat{z}_{ij}=\{n/(n-1)\}^{1/2} \hat{\bh}_i^T(\bx_j-\bar{\bx}_n)/\hat{\lambda}_i^{1/2}$ and $\sum_{j=1}^n \hat{z}_{ij}^2/n=1$ if $\hat{\lambda}_i>0$, where $\hat{\bh}_i$ denotes a unit eigenvector of $\bS$ corresponding to $\hat{\lambda}_{i}$. 
Let $\bX_0=\bX-\bmu \bone_n^T$ and $\bP_n=\bI_n-n^{-1}\bone_n \bone_n^T $, where $\bI_{n}$ denotes the $n$-square identity matrix. 
We note that $\bS_D=\bP_n\bX_0^T \bX_0 \bP_n/(n-1)$. 
We consider the sphericity condition:
$\tr(\bSigma^2)/\tr(\bSigma)^2\to 0$ as $d\to \infty$. 
When one can assume that $\bX$ is Gaussian or $\bZ=(z_{ij})$ is $\rho$-mixing, \citet{Ahn:2007} and \citet{Jung:2009} gave a geometric representation as follows: 
\begin{align}
\plim_{d\to \infty} \frac{\bX_0^T \bX_0}{\tr(\bSigma)}=\bI_n,\quad \mbox{so that}\quad 
\plim_{d\to \infty}
\frac{(n-1)\bS_{D}}{\tr(\bSigma)}=\bP_n.
\label{2.1}
\end{align}
\begin{remark}
\citet{Yata:2012} showed that (\ref{2.1}) holds under the sphericity condition and $ \var(||\bx_j-\bmu||^2)/\tr(\bSigma)^2\to 0$ as $d\to \infty$.
\end{remark}

From (\ref{2.1}), we observe that the eigenvalue becomes deterministic as the dimension grows while the eigenvector of $\bS_D$ does not uniquely determine the direction. 
We note that (\ref{1.1}) does not satisfy the assumption that $\bX$ is Gaussian or $\bZ$ is $\rho$-mixing. 
See Section 4.1.1 in \citet{Qiao:2010} for details.
\subsection{Geometric representation and consistency property of PC scores when $k=2$}
We will find a geometric representation for (\ref{1.1}) and the finding is completely different from (\ref{2.1}). 
We assume the following conditions:
\begin{condition}
\label{con2}
$\displaystyle \frac{\max_{i=1,...,k}\tr(\bSigma_i^2)}{ \Delta_{\min}^2}\to 0$ \ as $d\to \infty$.
\end{condition}
\begin{condition}
\label{con3}
$\displaystyle \frac{\max_{i=1,...,k}\var(||\bx-\bmu_i||^2|\bx \in \Pi_i )}{\Delta_{\min}^2}\to 0 $ \ as $d\to \infty$. 
\end{condition}
\begin{condition}
\label{con4}
$\displaystyle \frac{\tr(\bSigma_i)-\tr(\bSigma_j)}{\Delta_{\min}}\to 0 $ \ as $d\to \infty$ for all $i,j=1,...,k\ (i<j)$.
\end{condition}
\begin{remark}
If $\Pi_i$s are Gaussian, it holds that $\var(||\bx-\bmu_i||^2| \bx \in \Pi_i )=O\{\tr(\bSigma_i^2)\}$ for $i=1,...,k$, so that Condition 3 holds under Condition 2. 
On the other hand, Condition 2 is stronger than Condition 1 since $\lambda_{i1}^2 \le \tr(\bSigma_i^2)$ for $i=1,...,k$. 
\end{remark}

We define $r_j=(-1)^{i+1}(1-\eta_i)$ according to $\bx_j \in \Pi_i$ for $j=1,...,n$. 
The following result gives a geometric representation for (\ref{1.1}) when $k=2$.
\begin{theorem}
\label{thm1}
Assume $\Delta_{1,2} /\tr(\bSigma)\to c \ (> 0)$ as $d\to \infty$. 
Under Conditions 2 to 4, it holds 
\begin{equation}
\plim_{d\to \infty}\frac{(n-1)\bS_D}{\tr(\bSigma)}=c \br \br^T+(1-\varepsilon_1\varepsilon_2c)\bP_n,
\label{2.2}
\end{equation}
where $\br=(r_1,...,r_n)^T$. 
\end{theorem}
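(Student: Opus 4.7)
\textbf{Proof plan for Theorem \ref{thm1}.}
Since $(n-1)\bS_D = \bP_n \bX_0^T \bX_0 \bP_n$, my plan is to establish an entrywise limit in probability for the normalized Gram matrix $\bM := \bX_0^T \bX_0 / \tr(\bSigma)$ and then conjugate by $\bP_n$. For each entry $(\bx_i-\bmu)^T(\bx_j-\bmu)$ I would write $\bx_j - \bmu = (\bx_j - \bmu_{l_j}) + (\bmu_{l_j} - \bmu)$ where $\bx_j \in \Pi_{l_j}$, and exploit the two-class identities $\bmu_1 - \bmu = \varepsilon_2 \bmu_{1,2}$ and $\bmu_2 - \bmu = -\varepsilon_1 \bmu_{1,2}$. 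These give the deterministic building blocks $\|\bmu_l - \bmu\|^2 = \varepsilon_{3-l}^2 \Delta_{1,2}$ and $(\bmu_1-\bmu)^T(\bmu_2-\bmu) = -\varepsilon_1\varepsilon_2\Delta_{1,2}$.

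Next I would bound the stochastic error in every expanded entry. By Condition 3 and Chebyshev, $\|\bx_j - \bmu_{l_j}\|^2 = \tr(\bSigma_{l_j}) + o_p(\Delta_{\min})$. The mixed terms $(\bx_j - \bmu_{l_j})^T(\bmu_l-\bmu)$ have mean zero and variance at most $\lambda_{l_j 1}\|\bmu_l-\bmu\|^2$; since Condition 2 implies Condition 1, this is $o(\Delta_{\min}^2)$. For distinct $i,j$ the centered cross term $(\bx_i-\bmu_{l_i})^T(\bx_j-\bmu_{l_j})$ has mean zero by independence and variance $\tr(\bSigma_{l_i}\bSigma_{l_j}) \le \sqrt{\tr(\bSigma_{l_i}^2)\tr(\bSigma_{l_j}^2)}$, again $o(\Delta_{\min}^2)$ by Condition 2. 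Because $c>0$ forces $\tr(\bSigma) = \Theta(\Delta_{\min})$, each error normalized by $\tr(\bSigma)$ is $o_p(1)$, and with $n$ fixed only finitely many entries need to be controlled.

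Collecting the deterministic limits, the diagonal of $\bM$ tends to $\tr(\bSigma_{l_j})/\tr(\bSigma) + \varepsilon_{3-l_j}^2 c$, an off-diagonal entry within $\Pi_l$ to $\varepsilon_{3-l}^2 c$, and an off-diagonal entry across populations to $-\varepsilon_1\varepsilon_2 c$. Condition 4 makes $\tr(\bSigma_1)/\tr(\bSigma)$ and $\tr(\bSigma_2)/\tr(\bSigma)$ share a common limit, and the identity $\varepsilon_1\tr(\bSigma_1)+\varepsilon_2\tr(\bSigma_2)+\varepsilon_1\varepsilon_2\Delta_{1,2} = \tr(\bSigma)$ pins this common value to $1-\varepsilon_1\varepsilon_2 c$. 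All three cases can then be packaged as $\plim_{d\to\infty}\bM = (1-\varepsilon_1\varepsilon_2 c)\bI_n + c\,\be\be^T$, where $e_j = \varepsilon_2$ on $\Pi_1$ and $e_j = -\varepsilon_1$ on $\Pi_2$, since $c\,e_i e_j$ reproduces each off-diagonal coefficient exactly.

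The last step is to apply $\bP_n$ on both sides: the identity piece gives $(1-\varepsilon_1\varepsilon_2 c)\bP_n$ immediately, and the rank-one piece gives $c(\bP_n\be)(\bP_n\be)^T$. A direct computation shows $\bP_n\be = \be - (\eta_1\varepsilon_2 - \eta_2\varepsilon_1)\bone_n$, whose $j$th coordinate collapses to $\eta_2$ on $\Pi_1$ and to $-\eta_1$ on $\Pi_2$; that is precisely $\br$. This passage from the population proportions $\varepsilon_i$ to the sample proportions $\eta_i$ via centering is the conceptually interesting step, and algebraically it is what forces $\br$ rather than its population analogue to appear in (\ref{2.2}). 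The main technical obstacle is the uniform $o(\Delta_{\min}^2)$ control of variances across all label configurations, which is exactly what Conditions 2--4 are calibrated to deliver.
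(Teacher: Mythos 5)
Your proposal is correct and follows essentially the same route as the paper: an entrywise limit of the centered Gram matrix via Chebyshev bounds on the variances $\bmu_{1,2}^T\bSigma_i\bmu_{1,2}$, $\tr(\bSigma_i\bSigma_{i'})$ and $\var(\|\bx-\bmu_i\|^2\mid\bx\in\Pi_i)$ supplied by Conditions 2--4, followed by conjugation with $\bP_n$. The one place you diverge is the centering point: the paper's Lemma A.1 centers at $\bmu_\eta=\eta_1\bmu_1+\eta_2\bmu_2$, so the rank-one limit is directly $\br\br^T$ and the final $\bP_n$-conjugation is inert (since $\br^T\bone_n=0$), whereas you center at the population mean $\bmu$, obtain a rank-one term $c\,\be\be^T$ in the \emph{population} proportions $\varepsilon_i$, and let $\bP_n\be=\br$ convert them to the \emph{sample} proportions $\eta_i$. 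The two are equivalent because $\bP_n$ annihilates any constant shift of the columns, and your computation of $\bP_n\be$ (giving $\eta_2$ on $\Pi_1$ and $-\eta_1$ on $\Pi_2$) checks out; your version has the mild expository advantage of making explicit why $\eta_i$ rather than $\varepsilon_i$ appears in the limit, at the cost of one extra algebraic step.
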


From (\ref{2.2}), the first eigenvector of $\bS_D$ uniquely determines the direction. 
In fact, by noting $||\br ||^2=n\eta_1 \eta_2$, we have the following results for the first eigenvector and PC scores when $k=2$.
By using Corollary \ref{cor1}, one can classify $\bx_j$s into two groups by the sign of $\hat{z}_{1j}$s: 
\begin{corollary}
\label{cor1}
Under Conditions 2 to 4, it holds that for $n_i>0,\ i=1,2$ 
\begin{align*}
\plim_{d\to \infty}\hat{\bu}_{1}=\frac{\br}{\sqrt{ n\eta_1 \eta_2}}  \quad  
\mbox{and} \quad 
 \plim_{d\to \infty}\hat{z}_{1j}= \left\{ \begin{array}{ll}
\sqrt{ \eta_2/\eta_1} & \mbox{ when } \bx_j\in \Pi_1, \\[1mm]
-\sqrt{\eta_1/\eta_2} & \mbox{ when } \bx_j\in \Pi_2 
\end{array} \right. \mbox{ for $j=1,...,n$}.
\end{align*}
\end{corollary}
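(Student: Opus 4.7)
The plan is to read off the eigenstructure of the deterministic limit supplied by Theorem~\ref{thm1} and transfer it through the convergence in probability by standard perturbation theory for a simple eigenvalue. Write $\bM=c\br\br^T+(1-\varepsilon_1\varepsilon_2 c)\bP_n$ for the limit of $(n-1)\bS_D/\tr(\bSigma)$ appearing in (\ref{2.2}).

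First I would diagonalise $\bM$. From $r_j=\eta_2$ when $\bx_j\in\Pi_1$ and $r_j=-\eta_1$ when $\bx_j\in\Pi_2$, one gets $\bone_n^T\br=n_1\eta_2-n_2\eta_1=0$ and $\|\br\|^2=n_1\eta_2^2+n_2\eta_1^2=n\eta_1\eta_2$; in particular $\bP_n\br=\br$, so $\br\br^T$ and $\bP_n$ share $\br$ as a common eigendirection. Consequently $\br$ is an eigenvector of $\bM$ with eigenvalue $\alpha_1=cn\eta_1\eta_2+1-\varepsilon_1\varepsilon_2 c$, every unit vector of $\mathrm{range}(\bP_n)\cap\br^\perp$ is an eigenvector with eigenvalue $\alpha_2=1-\varepsilon_1\varepsilon_2 c\ge 0$ (nonnegativity follows from $\tr(\bSigma)=\varepsilon_1\varepsilon_2\Delta_{1,2}+\varepsilon_1\tr(\bSigma_1)+\varepsilon_2\tr(\bSigma_2)$, which forces $\varepsilon_1\varepsilon_2 c\le 1$), and $\bone_n$ lies in the kernel. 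When $n_1,n_2>0$, $\alpha_1-\alpha_2=cn\eta_1\eta_2>0$, so $\alpha_1$ is the simple top eigenvalue of $\bM$ with unit eigenvector $\br/\sqrt{n\eta_1\eta_2}$.

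Next, continuity of the leading spectral projector at a simple eigenvalue, combined with (\ref{2.2}), gives $\plim_{d\to\infty}\hat{\bu}_1=\pm\br/\sqrt{n\eta_1\eta_2}$. The sign is then fixed by the convention $\hat{\bu}_1^T\bz_1\ge 0$ together with (\ref{1.3}): the latter yields
\begin{equation*}
\plim_{d\to\infty}\br^T\bz_1=n\eta_1\eta_2\bigl(\sqrt{\varepsilon_2/\varepsilon_1}+\sqrt{\varepsilon_1/\varepsilon_2}\bigr)>0,
\end{equation*}
forcing the plus sign. Finally $\hat{z}_{1j}=\sqrt{n}\,\hat{u}_{1j}\to r_j/\sqrt{\eta_1\eta_2}$, which is $\sqrt{\eta_2/\eta_1}$ when $\bx_j\in\Pi_1$ and $-\sqrt{\eta_1/\eta_2}$ when $\bx_j\in\Pi_2$, as asserted.

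The main obstacle is not really computational: it is spotting that $\bone_n^T\br=0$ and hence $\bP_n\br=\br$, which is what makes $\br$ a common eigenvector of the two summands of $\bM$ and thereby makes the top eigenpair completely explicit, and then pinning down the correct sign of $\hat{\bu}_1$ via (\ref{1.3}). Everything after that is routine spectral perturbation on an explicit $n\times n$ matrix of fixed size.
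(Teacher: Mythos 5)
There is a genuine gap: your argument runs through Theorem~\ref{thm1}, but Theorem~\ref{thm1} carries the extra hypothesis $\Delta_{1,2}/\tr(\bSigma)\to c>0$, which is \emph{not} among the hypotheses of Corollary~\ref{cor1} (the corollary assumes only Conditions 2--4). Conditions 2--4 do not force this ratio to have a positive limit. For instance, with $\bSigma_1=\bSigma_2=\bI_d$ and $\Delta_{1,2}\asymp d^{0.7}$ one has $\tr(\bSigma_i^2)/\Delta_{1,2}^2\to 0$, so Conditions 2--4 hold, yet $\Delta_{1,2}/\tr(\bSigma)\to 0$. In that regime the limit you would extract from the Theorem-1 normalisation is $\plim (n-1)\bS_D/\tr(\bSigma)=\bP_n$, whose nonzero eigenvalue is $(n-1)$-fold degenerate; the leading spectral projector of the limit is then the whole of $\bone_n^{\perp}$ and perturbation theory tells you nothing about the direction of $\hat{\bu}_1$. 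So your proof establishes the corollary only under the additional assumption that $c$ exists and is strictly positive.

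The paper avoids this by working with Lemma~\ref{lem1} instead: $\{(n-1)\bS_D-\tr(\bSigma_1)\bP_n\}/\Delta_{1,2}\to\br\br^T$ in probability under Conditions 2--4 alone. The point is that $\hat{\bu}_1$ lies in $\bone_n^{\perp}$ (since $\bS_D\bone_n=\bze$), and on that subspace $\bP_n$ acts as the identity, so subtracting $\tr(\bSigma_1)\bP_n$ is a scalar shift that leaves the eigenvectors of $\bS_D$ untouched while removing the possibly dominant isotropic part; the surviving limit $\br\br^T$ has the simple nonzero eigenvalue $n\eta_1\eta_2$ no matter how large $\tr(\bSigma_1)$ is relative to $\Delta_{1,2}$. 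Your spectral computation (the observations $\bone_n^T\br=0$, $\|\br\|^2=n\eta_1\eta_2$, $\bP_n\br=\br$) and your sign-fixing via $\hat{\bu}_1^T\bz_1\ge 0$ and (\ref{1.3}) are exactly right and carry over verbatim once you replace the Theorem-1 limit by the Lemma-1 limit; with that substitution the proof is complete and matches the paper's.
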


We considered an easy example such as $\Pi_i:N_d(\bmu_i,\bSigma_i),\ i=1,2$, with $\bmu_1=\bze$, $\bmu_2=\bone_d$, $\bSigma_1=(0.3^{|i-j|^{1/3}})$ and $\bSigma_2=\bB(0.3^{|i-j|^{1/3}})\bB$, 
where $\bB=\mbox{diag}[-\{0.5+1/(d+1)\}^{1/2},\{0.5+2/(d+1)\}^{1/2},...,(-1)^d\{0.5+d/(d+1)\}^{1/2}]$. 
We note that $\Delta_{1,2}=d$ and $\bSigma_1 \neq \bSigma_2$ but $\tr(\bSigma_1)=\tr(\bSigma_2)=d$. 
Then, Conditions 2 to 4 hold. 
We set $n_1=1$ and $n_2=2$.
We took $n=3$ samples as $\bx_1 \in \Pi_1$ and $\bx_2,\bx_3 \in \Pi_2$. 
In Fig. 1, we displayed scatter plots of 20 independent pairs of $\pm \hat{\bu}_{1}$ when (a) $d=5$, (b) $d=50$, (c) $d=500$ and (d) $d=5000$.  
We denoted $\br=(2/3,-1/3,-1/3)^T$ by the solid line and $\bone_n=(1,1,1)^T$ by the dotted line. 
We note that $\hat{\bu}_{1}^T \bone_n=0$ when $\bS_D \neq \bO$. 
We observed that all the plots of $\pm \hat{\bu}_1$ gather on the surface of the orthogonal complement of $\bone_n$. 
Also, the plots appeared close to $\br$ as $d$ increases. 
Thus one can classify $\bx_j$s into two groups by the sign of $\hat{z}_{1j}$s. 
If one cannot assume Condition 3 or 4, we recommend to estimate PC scores by using the cross-data-matrix methodology given by \citet{Yata:2010}. 
See \citet{Yata:2010,Yata:2013} for the details. 

\begin{figure}
\centering
\includegraphics[scale=0.385]{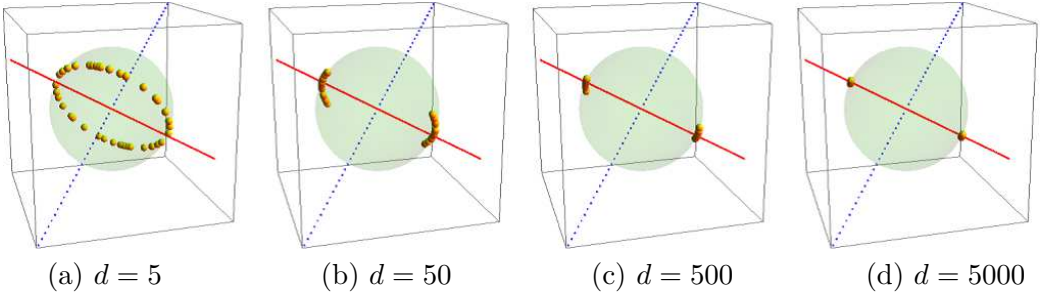}
\\[0.01mm]
\quad (a) $d=5$ \ \ \hspace{1.6cm} (b) $d=50$ \hspace{1.6cm} (c) $d=500$ \hspace{1.5cm} (d) $d=5000$
\caption{\label{fig01}Toy example to illustrate the geometric representation of $\pm \hat{\bu}_1$ on the unit sphere when $k=2$ and $n=3$. 
We plotted 20 independent pairs of $\pm \hat{\bu}_1$ when $\bx_1 \in \Pi_1$ and $\bx_2,\bx_3 \in \Pi_2$.
The solid line denotes $\br=(2/3,-1/3,-1/3)^T$ and the dotted line denotes $\bone_n=(1,1,1)^T$.}
\end{figure}

\section{PC scores for multiclass mixture model}
\subsection{Asymptotic behaviors of true PC scores when $k \ge 3$}
We consider PC scores for the $k\ (\ge 3)$-class mixture model. 
Let $\varepsilon_{(0)}=0$ and $\varepsilon_{(i)}=\sum_{j=1}^i\varepsilon_{j}$ for $i=1,...,k$. 
We assume the condition: 
\begin{condition}
\label{con5}
$\displaystyle \mbox{Angle}(\bmu_{i,i+1},\bmu_{j,j+1})\to \frac{\pi}{2}$ \ and \ $\displaystyle \frac{\Delta_{j,j+1}}{\Delta_{i,i+1}}\to 0$ \ as \ $d\to \infty$ \ for $i,j=1,...,k-1\ (i<j)$.
\end{condition}
We note that $\Delta_{k-1,k}/\Delta_{\min}\to 1$ as $d\to \infty$ under Condition 5. 
Then, we have the following results.
\begin{theorem}
\label{thm2}
Under Conditions 1 and 5, it holds that for $i=1,...,k-1;\ j=1,...,n$ 
\begin{align}
\plim_{d\to \infty} \frac{s_{ij} }{ \lambda_i^{1/2}}= \left\{ \begin{array}{ll}
0 & \mbox{ when $i \ge 2$ and } \bx_j\in \bigcup_{m=1}^{i-1}\Pi_m, \\[1mm]
\sqrt{ (1-\varepsilon_{(i)})/\{\varepsilon_i(1-\varepsilon_{(i-1)})\}} & \mbox{ when } \bx_j \in  \Pi_i, \\[1mm]
-\sqrt{ \varepsilon_{i}/\{(1-\varepsilon_{(i)})(1-\varepsilon_{(i-1)})\}} & \mbox{ when } \bx_j\in \bigcup_{m=i+1}^{k}\Pi_m. 
\end{array} \right. \label{3.1}
\end{align}
\end{theorem}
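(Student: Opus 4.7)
The plan is to prove convergence in probability by (i) showing the conditional variance of $s_{ij}/\lambda_i^{1/2}$ given the class of $\bx_j$ vanishes, and (ii) computing the limit of the conditional mean $\bh_i^T(\bmu_\ell-\bmu)/\lambda_i^{1/2}$ on the event $\bx_j\in\Pi_\ell$.

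\textbf{Variance control.} Conditional on $\bx_j\in\Pi_\ell$, $\var(s_{ij}/\lambda_i^{1/2})=\bh_i^T\bSigma_\ell\bh_i/\lambda_i\le\lambda_{\ell,1}/\lambda_i$. Condition 5 yields $\Delta_{\min}=\Delta_{k-1,k}$ in the limit and $\Delta_{i,i+1}\ge\Delta_{\min}$ for $i\le k-1$; together with the spectral analysis below giving $\lambda_i\asymp\Delta_{i,i+1}$, Condition 1 forces this ratio to vanish, and Chebyshev reduces everything to the deterministic convergence of the conditional mean.

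\textbf{Spectral structure.} Split $\bSigma=\bB+\bW$ with $\bB=\sum_\ell\varepsilon_\ell(\bmu_\ell-\bmu)(\bmu_\ell-\bmu)^T$ and $\bW=\sum_\ell\varepsilon_\ell\bSigma_\ell$, so $\|\bW\|/\Delta_{\min}\to 0$ by Condition 1. Factor $\bB=\bM\bC\bM^T$ with $\bM=(\bmu_{1,2},\ldots,\bmu_{k-1,k})$ and $\bC_{t,t'}=\varepsilon_{(\min(t,t'))}(1-\varepsilon_{(\max(t,t'))})$. Condition 5 makes the Gram matrix satisfy $\bD^{-1/2}\bM^T\bM\bD^{-1/2}\to\bI_{k-1}$ with $\bD=\mathrm{diag}(\Delta_{1,2},\ldots,\Delta_{k-1,k})$, so on its range $\bB$ is orthogonally equivalent to a $(k-1)\times(k-1)$ matrix $\Theta$ asymptotic to $\bD^{1/2}\bC\bD^{1/2}$, whose diagonal $\varepsilon_{(t)}(1-\varepsilon_{(t)})\Delta_{t,t+1}$ has strictly separated orders. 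Analyzing $\Theta$ directly (or by induction on $k$, peeling off the dominant entry) pins down $\lambda_i/\Delta_{i,i+1}\to\varepsilon_i(1-\varepsilon_{(i)})/(1-\varepsilon_{(i-1)})$ and the $i$-th unit eigenvector $\bw_i$ of $\Theta$.

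\textbf{Mean computation.} Telescoping each $\bmu_{\ell,m}$ along the chain of successive differences $\bmu_{t,t+1}$ turns $\bmu_\ell-\bmu=\sum_m\varepsilon_m\bmu_{\ell,m}$ into $\bmu_\ell-\bmu=\bM\bc_\ell$ with $(\bc_\ell)_t=1-\varepsilon_{(t)}$ for $t\ge\ell$ and $-\varepsilon_{(t)}$ for $t<\ell$. Let $\bR$ be the Cholesky factor of $\bM^T\bM$ (so $\bR\bD^{-1/2}\to\bI_{k-1}$); then $\bh_i=\bM\bR^{-1}\bw_i+o(1)$ gives $\bh_i^T(\bmu_\ell-\bmu)/\lambda_i^{1/2}\to\bw_i^T\bR\bc_\ell/\lambda_i^{1/2}$, and substituting the explicit $\bw_i$ and simplifying produces (\ref{3.1}). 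Alternatively, the candidate limits $g_{\ell,i}$ from (\ref{3.1}) can be shown to be the unique solution of $\sum_\ell\varepsilon_\ell g_{\ell,i}=0$ (from $\sum_\ell\varepsilon_\ell(\bmu_\ell-\bmu)=0$) and $\sum_\ell\varepsilon_\ell g_{\ell,i}g_{\ell,i'}=\delta_{ii'}$ (from $\bh_i^T\bB\bh_{i'}\to\lambda_i\delta_{ii'}$), together with the hierarchical pattern $g_{\ell,i}=0$ for $\ell<i$ and $g_{\ell,i}$ constant in $\ell$ for $\ell>i$ forced by the strict ordering of the $\Delta_{t,t+1}$.

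\textbf{Main obstacle.} The heart of the argument is the eigenvector analysis of $\Theta$. Although its diagonals are well separated, its off-diagonals have order $\sqrt{\Theta_{tt}\Theta_{t't'}}$---comparable to the smaller diagonal---so $\bw_i$ is close to $\be_i$ only in Euclidean norm. The ``small'' coefficients $w_{i,t}$ for $t<i$ satisfy $w_{i,t}\Delta_{t,t+1}^{1/2}\asymp w_{i,i}\Delta_{i,i+1}^{1/2}$ and therefore contribute non-vanishingly to $\bw_i^T\bR\bc_\ell$. Tracking these perturbative corrections to the precision that survives the $\lambda_i^{1/2}$ rescaling, and verifying the cancellations that force $g_{\ell,i}=0$ for $\ell<i$ while producing the explicit expressions for $\ell\ge i$, is where the bulk of the algebraic work lies.
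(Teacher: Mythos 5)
Your plan follows essentially the same route as the paper: you split off the within-class part and kill it by Chebyshev under Condition~1, expand the between-class covariance over the successive differences $\bmu_{t,t+1}$ with the coefficient matrix $\bC_{t,t'}=\varepsilon_{(\min(t,t'))}(1-\varepsilon_{(\max(t,t'))})$ (this is exactly the paper's identity (3)), telescope $\bmu_\ell-\bmu$ into the same basis (the paper's identity (9)), and reduce everything to the eigenstructure of the between-class part under the asymptotic orthogonality and scale separation of Condition~5. Your consistency check that the claimed limits satisfy $\sum_\ell\varepsilon_\ell g_{\ell,i}=0$ and $\sum_\ell\varepsilon_\ell g_{\ell,i}g_{\ell,i'}=\delta_{ii'}$ is correct and is a nice sanity check the paper does not state.

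The one substantive gap is that the step you yourself flag as the ``main obstacle'' is precisely the content of the paper's Lemma~A.2, and you defer it rather than carry it out. Concretely: you need not only $\lambda_i/\Delta_{i,i+1}\to\varepsilon_i(1-\varepsilon_{(i)})/(1-\varepsilon_{(i-1)})$ and $\bh_i^T\acute{\bmu}_{i,i+1}\to 1$, but the exact first-order coefficient of the ``small'' component along the dominant lower-index direction, namely $\bh_i^T\acute{\bmu}_{i-1,i}=-\{(1-\varepsilon_{(i)})/(1-\varepsilon_{(i-1)})\}\Delta_{(i-1,i)}^{1/2}\{1+o(1)\}$ together with $\bh_j^T\acute{\bmu}_{i,i+1}=o(\Delta_{(i,j)}^{1/2})$ for $i+1<j$; after dividing by $\lambda_i^{1/2}\asymp\Delta_{i,i+1}^{1/2}$ these terms contribute at order one, and it is exactly their cancellation against the $-\varepsilon_{(i-1)}\bmu_{i-1,i}$ term in the telescoped mean that produces the limit $0$ for $\ell<i$ and the constant value for $\ell>i$. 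The paper extracts these coefficients by an induction on $i$ using the orthogonality relations $\bh_{i'}^T\bSigma\bh_j=0$ for $i'<j$ (its displays (4), (6)--(8)), which is the concrete realization of your ``peeling off the dominant entry.'' Until that perturbation analysis is written out and the resulting algebra in the mean computation is verified, the proposal is a correct and well-targeted plan rather than a proof.
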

\begin{remark}
(\ref{1.3}) is equivalent to (\ref{3.1}) with $k=2$ and $i=1$. 
\end{remark}
\begin{corollary}
\label{cor2}
Under Conditions 1 and 5, it holds that for $i=1,...,k-1$
\begin{align*}
\frac{\lambda_i}{\varepsilon_{(i)}(1-\varepsilon_{(i)})\Delta_{i,i+1}/(1-\varepsilon_{(i-1)})}\to 1 \quad \mbox{and}\quad \mbox{Angle}(\bh_i,\bmu_{i,i+1})\to 0\quad \mbox{as $d\to \infty$}. \notag
\end{align*}
\end{corollary}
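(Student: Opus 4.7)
The plan is to derive both claims by combining Theorem~\ref{thm2} (which provides an upper bound via Cauchy--Schwarz) with a finite-dimensional spectral analysis of the between-class covariance (which supplies the matching lower bound). For the upper bound, conditional on $\bx_j\in\Pi_m$ I would decompose $s_{ij}/\lambda_i^{1/2}=\bh_i^T(\bmu_m-\bmu)/\lambda_i^{1/2}+\bh_i^T(\bx_j-\bmu_m)/\lambda_i^{1/2}$; the noise term has conditional variance $\bh_i^T\bSigma_m\bh_i/\lambda_i\le\lambda_{m1}/\lambda_i$, which vanishes by Condition~\ref{con1} once $\lambda_i$ is known to be of order $\Delta_{i,i+1}$. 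Thus the in-probability limit in Theorem~\ref{thm2} is the deterministic limit of $\bh_i^T(\bmu_m-\bmu)/\lambda_i^{1/2}$; differencing the cases $m=i$ and $m=i+1$ and simplifying gives $\bh_i^T\bmu_{i,i+1}/\lambda_i^{1/2}\to\{(1-\varepsilon_{(i-1)})/[\varepsilon_i(1-\varepsilon_{(i)})]\}^{1/2}$, and combining with Cauchy--Schwarz $(\bh_i^T\bmu_{i,i+1})^2\le\Delta_{i,i+1}$ yields an upper bound on $\lambda_i$. Asymptotic attainment of this Cauchy--Schwarz bound would force $\mbox{Angle}(\bh_i,\bmu_{i,i+1})\to 0$, so everything reduces to producing a matching lower bound on $\lambda_i$.

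For the lower bound I would reduce to the between-class part $\bSigma_B=\sum_{m<m'}\varepsilon_m\varepsilon_{m'}\bmu_{m,m'}\bmu_{m,m'}^T$. Condition~\ref{con1} implies that $\sum_m\varepsilon_m\bSigma_m$ has operator norm $o(\Delta_{\min})$, so Weyl's inequality yields $\lambda_i(\bSigma)=\lambda_i(\bSigma_B)+o(\Delta_{k-1,k})$. The telescoping identity $\bmu_{m,m'}=\sum_{l=m}^{m'-1}\bmu_{l,l+1}$ factors $\bSigma_B=\bV\bT\bV^T$ with $\bV=[\bmu_{1,2},\ldots,\bmu_{k-1,k}]$ and $(\bT)_{l,l'}=\varepsilon_{(\min(l,l'))}(1-\varepsilon_{(\max(l,l'))})$, and Condition~\ref{con5} makes $\bV^T\bV$ asymptotically equal to $\bD^2=\mbox{diag}(\Delta_{1,2},\ldots,\Delta_{k-1,k})$. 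The nonzero eigenvalues of $\bSigma_B$ then coincide with those of the symmetric $(k-1)\times(k-1)$ matrix $\bD\bT\bD$, whose diagonal $\varepsilon_{(l)}(1-\varepsilon_{(l)})\Delta_{l,l+1}$ is strictly decreasing in $l$. A sequential Schur-complement argument---peeling off the top eigenvalue and reducing to a $(k-2)\times(k-2)$ matrix of the same algebraic form, after renormalising the $\varepsilon$'s to the conditional distribution given $\{\bx\notin\Pi_1\}$---would then deliver $\lambda_i(\bSigma_B)\sim\varepsilon_{(i)}(1-\varepsilon_{(i)})\Delta_{i,i+1}/(1-\varepsilon_{(i-1)})$ by induction on $k$. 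Davis--Kahan applied to $\bSigma$ versus $\bSigma_B$ then transfers the eigenvector alignment to $\bh_i$.

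The main obstacle is this Schur-complement step. Although $\bD^2$ has strictly separated diagonal, the off-diagonals $\varepsilon_{(l)}(1-\varepsilon_{(l')})\Delta_{l,l+1}^{1/2}\Delta_{l',l'+1}^{1/2}$ of $\bD\bT\bD$ can be much larger than the subleading diagonal entries, so subleading eigenvalues acquire non-trivial corrections from the off-diagonals rather than being simple perturbations of the diagonal. Carrying these corrections through the recursion and verifying that they combine with the telescoping $1-\varepsilon_{(l)}=(1-\varepsilon_{(l-1)})-\varepsilon_l$ to produce the clean quotient $\varepsilon_{(i)}(1-\varepsilon_{(i)})/(1-\varepsilon_{(i-1)})$ is the algebraic heart of the proof, and a convenient consistency check is that the resulting $\bh_i^T\bmu_{i,i+1}/\lambda_i^{1/2}$ matches the value extracted directly from Theorem~\ref{thm2} in the first paragraph.
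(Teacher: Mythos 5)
Your reduction is essentially the one the paper itself uses: its Lemma~\ref{lem2} (from which the corollary is read off in one line) strips off $\sum_m\varepsilon_m\bSigma_m$ via Condition~\ref{con1}, telescopes $\bmu_{m,m'}=\sum_{l=m}^{m'-1}\bmu_{l,l+1}$ to obtain the $(k-1)\times(k-1)$ core with entries $\varepsilon_{(\min(l,l'))}(1-\varepsilon_{(\max(l,l'))})$, and uses Condition~\ref{con5} to treat the $\bmu_{l,l+1}$ as asymptotically orthogonal with rapidly decreasing lengths. But the step you yourself flag as ``the algebraic heart of the proof'' is precisely the step you do not carry out, and for $i\ge 2$ it is the entire content of the result: because the off-diagonal entries $\varepsilon_{(l)}(1-\varepsilon_{(l')})\Delta_{l,l+1}^{1/2}\Delta_{l',l'+1}^{1/2}$ dominate the subleading diagonal entries, $\lambda_i$ is \emph{not} a small perturbation of the $i$th diagonal entry. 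The paper resolves this by exploiting the exact orthogonality $\bh_l^T\bSigma\bh_j=0$ for $l<j$ to show $\bh_j^T\acute{\bmu}_{l,l+1}=-\{(1-\varepsilon_{(l+1)})/(1-\varepsilon_{(l)})\}\acute{\bmu}_{l+1,l+2}^T\bh_j\,\Delta_{(l,l+1)}^{1/2}+o(\Delta_{(l,l+1)}^{1/2})$; this component of $\bh_i$ along $\acute{\bmu}_{i-1,i}$ is $o(1)$ (hence harmless for the angle) but contributes at full order $\Delta_{i,i+1}$ to $\lambda_i=\bh_i^T\bSigma\bh_i$, which is exactly the correction your recursion must track. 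Until that computation is actually done you have neither the lower bound nor, therefore, the upper bound of your first paragraph, since that paragraph is explicitly conditional on $\lambda_i\asymp\Delta_{i,i+1}$ being already known. As written, the proposal is a plan with its decisive computation missing.

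There is also a concrete inconsistency between your two halves. Differencing Theorem~\ref{thm2} at $m=i$ and $m=i+1$ gives, as you say, $(\bh_i^T\bmu_{i,i+1})^2/\lambda_i\to(1-\varepsilon_{(i-1)})/\{\varepsilon_i(1-\varepsilon_{(i)})\}$, hence $\lambda_i\sim\varepsilon_i(1-\varepsilon_{(i)})\Delta_{i,i+1}/(1-\varepsilon_{(i-1)})$ once the angle vanishes; your second paragraph instead targets $\varepsilon_{(i)}(1-\varepsilon_{(i)})\Delta_{i,i+1}/(1-\varepsilon_{(i-1)})$. These differ for every $i\ge2$, so the ``consistency check'' you propose would in fact fail. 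A direct determinant/trace computation of the $2\times2$ core for $k=3$ gives $\lambda_2\sim\varepsilon_2(1-\varepsilon_{(2)})\Delta_{2,3}/(1-\varepsilon_{(1)})$, agreeing with the paper's Lemma~\ref{lem2} and with the requirement that the limiting scores in Theorem~\ref{thm2} have unit variance; the $\varepsilon_{(i)}$ in the corollary's display is evidently a misprint for $\varepsilon_i$. You need to resolve this discrepancy explicitly rather than assert that the two computations agree.
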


For example, when $k=3$, from (\ref{3.1}) we have that for $j=1,...,n$
\begin{align*}
&\plim_{d\to \infty} \frac{s_{1j} }{\lambda_1^{1/2}}= \left\{ \begin{array}{ll}
\sqrt{(1-\varepsilon_1)/\varepsilon_1} & \mbox{ when } \bx_j\in \Pi_1, \\[1mm]
-\sqrt{\varepsilon_1/(1-\varepsilon_1)} & \mbox{ when } \bx_j\notin \Pi_1 
\end{array} \right. \\
\mbox{and}\quad 
&\plim_{d\to \infty} \frac{s_{2j} }{\lambda_2^{1/2}}=
\left\{ \begin{array}{ll}
0 & \mbox{ when } \bx_j\in \Pi_1, 
\\[1mm]
\sqrt{ \varepsilon_3/\{\varepsilon_2(1-\varepsilon_1)\}} & \mbox{ when } \bx_j \in  \Pi_2, \\[1mm]
-\sqrt{\varepsilon_2/\{\varepsilon_3(1-\varepsilon_1)\}} & \mbox{ when } \bx_j \in  \Pi_3. 
\end{array} \right.
\end{align*}
One can check whether $\bx_j \in \Pi_1$ or not by the first PC score.
If $\bx_j \notin \Pi_1$, one can check whether $\bx_j \in \Pi_2$ or $\bx_j \in \Pi_3$ by the second PC score. 
In general, one can classify $\bx_j$s by using at most the first $k-1$ PC scores. 

We considered a toy example such as $\Pi_i:N_d(\bmu_i,\bSigma_i),\ i=1,...,4$, where
$\bmu_1=\bone_d$, 
$\bmu_2=(1,...,1,0,...,0)^T$ whose first $\lceil d^{3/4} \rceil $ elements are $1$, 
$\bmu_3=(1,...,1,0,...,0)^T$ whose first $\lceil d^{1/2} \rceil $ elements are $1$, 
and $\bmu_4=\bze$.
Here, $\lceil \cdot \rceil$ denotes the ceiling function.
We set $\bSigma_1=(0.3^{|i-j|^{1/3}})$, $\bSigma_2=\bB(0.3^{|i-j|^{1/3}})\bB$, $\bSigma_3=0.8\bSigma_1$ and $\bSigma_4=1.2 \bSigma_2$, where $\bB$ is defined in Section 2.2. 
Then, Conditions 1 and 5 hold. 
We first considered the case when $k=3:\ \Pi_i,\ i=1,2,3$, having $(\varepsilon_1,\varepsilon_2,\varepsilon_3)=(1/2,1/4,1/4)$. 
We set $n=20$ and $(n_1,n_2,n_3)=(10,5,5)$. 
From Theorem~\ref{thm2} one can expect that $(z_{1j},z_{2j})\ (=(s_{1j}/\lambda_{1}^{1/2},s_{2j}/\lambda_{2}^{1/2}) )$ becomes close to $(1,0)$ when $\bx_j\in \Pi_1$, $(-1,2^{1/2})$ when $\bx_j\in \Pi_2$, and $(-1,-2^{1/2})$ when $\bx_j\in \Pi_3$. 
In Fig. 2, we displayed scatter plots of $(z_{1j},z_{2j})$, $j=1,...,n$, 
when (a) $d=100$, (b) $d=1000$ and (c) $d=10000$. 
We observed that the scatter plots appear close to those three vertices as $d$ increases. 
\begin{figure}
\centering
\includegraphics[scale=0.34]{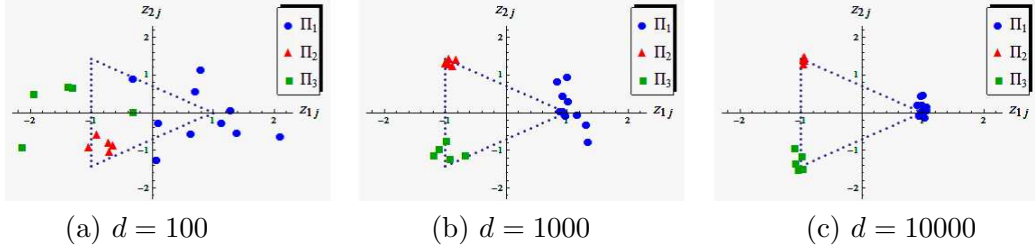}
\\[0.01mm]
(a) $d=100$ \hspace{2.7cm} (b) $d=1000$ \hspace{2.6cm} (c) $d=10000$
\caption{Toy example to illustrate the asymptotic behaviors of true PC scores when $k=3$. 
We plotted $(z_{1j},z_{2j})$ 
which is denoted by small circles when $\bx_j\in \Pi_1$, by small triangles when $\bx_j\in \Pi_2$, and by small squares when $\bx_j\in \Pi_3$. 
The dashed triangle consists of three vertices, $(1,0)$, $(-1,2^{1/2})$ and $(-1,-2^{1/2})$, which are theoretical convergent points.
}
\end{figure}

Next, we considered the case when $k=4:\ \Pi_i,\ i=1,...,4$, having $\varepsilon_1=\cdots =\varepsilon_4=1/4$. 
We set $n=20$ and $n_1=\cdots = n_4=5$.
In Fig. 3, we displayed scatter plots of $(z_{1j},z_{2j},z_{3j})$, $j=1,...,n$, 
when (a) $d=100$, (b) $d=1000$ and (c) $d=10000$. 
From Theorem~\ref{thm2}, we displayed the triangular pyramid given by (\ref{3.1}) with $k=4$.
As expected theoretically, we observed that the scatter plots appear close to four vertices of the triangular pyramid as $d$ increases.
They seemed to converge slower in Fig. 3 than in Fig. 2. 
This is probably because the conditions of Theorem~\ref{thm2} become strict as $k$ increases.
\begin{figure}
\centering
\includegraphics[scale=0.5]{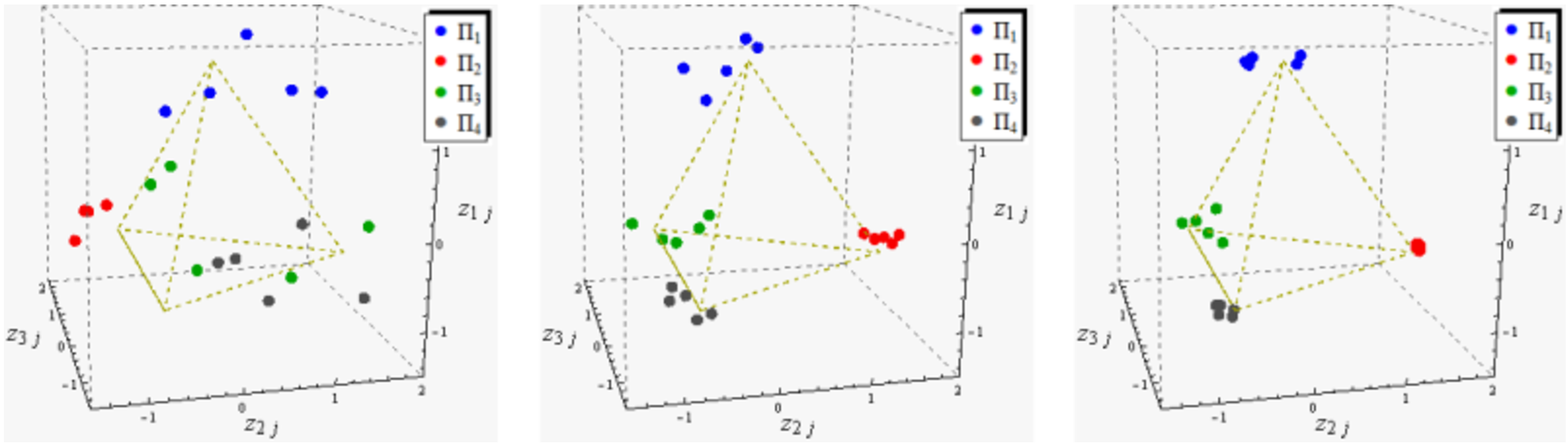}
\\[0.01mm]
(a) $d=100$ \hspace{2.7cm} (b) $d=1000$ \hspace{2.6cm} (c) $d=10000$
\caption{Toy example to illustrate the asymptotic behaviors of true PC scores when $k=4$. 
We plotted $(z_{1j},z_{2j},z_{3j})$. 
The dashed triangular pyramid was given by (\ref{3.1}) with $k=4$.
}
\end{figure}
\subsection{Consistency property of PC scores when $k \ge 3$}
Let $\eta_{(0)}=0$ and $\eta_{(i)}=\sum_{j=1}^i\eta_j$ for $i=1,...,k$. 
We assume the condition:
\begin{condition}
$\displaystyle  \frac{\max_{i=1,...,k-2;j=1,...,k} (\bmu_{i,i+1}^T\bSigma_j\bmu_{i,i+1}) }{\Delta_{\min}^2}\to 0$ \ as \ $d\to \infty$. 
\end{condition}
As for the estimated PC scores, we have the following result.
From Theorem \ref{thm3}, one can classify $\bx_j$s into $k$ groups by the elements of $\hat{\bu}_{i},\ i=1,...,k-1$:
\begin{theorem}
\label{thm3}
Under Conditions 2 to 6, it holds that for $n_i>0$, $i=1,...,k$
\begin{align}
\plim_{d\to \infty}\hat{z}_{ij}= \left\{ 
\begin{array}{ll}
0 & \mbox{ when $i \ge 2$ and } \bx_j\in \bigcup_{m=1}^{i-1}\Pi_m, \\[1mm]
\sqrt{(1-\eta_{(i)})/\{\eta_i(1-\eta_{(i-1)})\}} & \mbox{ when }\bx_j \in  \Pi_i, \\[1mm]
-\sqrt{\eta_{i}/\{(1-\eta_{(i)})(1-\eta_{(i-1)})\}} & \mbox{ when } \bx_j \in \bigcup_{m=i+1}^{k}\Pi_m 
\end{array} \right. 
\label{3.2}
\end{align}
for $i=1,...,k-1;\ j=1,...,n$.
\end{theorem}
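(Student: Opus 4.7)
Following the two-class decomposition in the proof of Theorem \ref{thm1}, I would write $\bX=\bM+\bV$, where $\bM=(\bmu_{\pi(1)},\ldots,\bmu_{\pi(n)})$ encodes the unknown population label $\pi(j)$ of each $\bx_j$ and $\bV$ collects the within-class deviations $\bw_j=\bx_j-\bmu_{\pi(j)}$. Applying $\bP_n$ on each side gives
\begin{equation*}
(n-1)\bS_D=\bP_n\bM^T\bM\bP_n+\bP_n\bV^T\bV\bP_n+\bP_n\bM^T\bV\bP_n+\bP_n\bV^T\bM\bP_n.
\end{equation*}
Exactly as in the derivation of (\ref{2.2}), Conditions \ref{con2}--\ref{con4} force $\bP_n\bV^T\bV\bP_n=\tau\bP_n+o_p(\Delta_{\min})$ in operator norm, where $\tau$ is the common value of $\tr(\bSigma_i)$ (up to $o(\Delta_{\min})$) enforced by Condition \ref{con4}.

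The new content is the analysis of the signal and cross terms under Condition \ref{con5}. The telescoping identity $\bmu_i-\bmu_k=\sum_{m=i}^{k-1}\bmu_{m,m+1}$ yields $\bmu_{\pi(j)}-\bar{\bmu}_n=\sum_{m=1}^{k-1}\alpha_m(j)\bmu_{m,m+1}$ with $\alpha_m(j)=\mathbb{1}(\pi(j)\le m)-\eta_{(m)}$, so the $(j,j')$ entry of $\bP_n\bM^T\bM\bP_n$ equals $\sum_{m,m'}\alpha_m(j)\alpha_{m'}(j')\bmu_{m,m+1}^T\bmu_{m',m'+1}$. Condition \ref{con5} forces $|\bmu_{m,m+1}^T\bmu_{m',m'+1}|=o(\sqrt{\Delta_{m,m+1}\Delta_{m',m'+1}})$ for $m\neq m'$, hence
\begin{equation*}
\bP_n\bM^T\bM\bP_n=\sum_{m=1}^{k-1}\Delta_{m,m+1}\boldsymbol{\alpha}_m\boldsymbol{\alpha}_m^T+\bE,
\end{equation*}
with $\boldsymbol{\alpha}_m=(\alpha_m(1),\ldots,\alpha_m(n))^T$ and $\bE$ of smaller order than each signal component. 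The cross block is controlled entrywise by $\sum_m\alpha_m(j)^2\bmu_{m,m+1}^T\bSigma_{\pi(j')}\bmu_{m,m+1}$ plus Cauchy--Schwarz terms: Condition 6 handles the indices $m\le k-2$, while Condition \ref{con2} (which implies $\lambda_{j'1}=o(\Delta_{\min})$) handles the residual $m=k-1$ case via $\bmu_{k-1,k}^T\bSigma_{j'}\bmu_{k-1,k}\le\lambda_{j'1}\Delta_{k-1,k}=o(\Delta_{\min}^2)$. Thus $\bP_n\bM^T\bV\bP_n=o_p(\Delta_{\min})$ in operator norm.

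Collecting, $(n-1)\bS_D=\sum_{m=1}^{k-1}\Delta_{m,m+1}\boldsymbol{\alpha}_m\boldsymbol{\alpha}_m^T+\tau\bP_n+o_p(\Delta_{\min})$. Since $\tau\bP_n$ acts as a scalar on $\mbox{range}(\bP_n)\supset\mbox{span}(\boldsymbol{\alpha}_1,\ldots,\boldsymbol{\alpha}_{k-1})$, the ranked eigenvectors of $(n-1)\bS_D$ inside this range coincide asymptotically with those of the signal matrix. The strict ordering $\Delta_{m+1,m+2}/\Delta_{m,m+1}\to 0$ from Condition \ref{con5} then supports an iterated rank-one perturbation argument: the top eigenvector converges to $\boldsymbol{\alpha}_1/||\boldsymbol{\alpha}_1||$, and inductively the $m$-th eigenvector converges to $\bb_m/||\bb_m||$, where $\bb_m$ is the Gram--Schmidt residue of $\boldsymbol{\alpha}_m$ against $\boldsymbol{\alpha}_1,\ldots,\boldsymbol{\alpha}_{m-1}$. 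A direct calculation (using $\eta_{(m)}-\eta_{(m-1)}=\eta_m$) shows that $\bb_m$ has entries $0$ on $\bigcup_{l<m}\Pi_l$, $(1-\eta_{(m)})/(1-\eta_{(m-1)})$ on $\Pi_m$, and $-\eta_m/(1-\eta_{(m-1)})$ on $\bigcup_{l>m}\Pi_l$, with $||\bb_m||^2=n\eta_m(1-\eta_{(m)})/(1-\eta_{(m-1)})$; multiplying the unit vector $\bb_m/||\bb_m||$ by $n^{1/2}$ (since $\hat z_{ij}=n^{1/2}\hat u_{ij}$) reproduces (\ref{3.2}). The main obstacle will be pushing the perturbation down to $m=k-1$, where the signal scale is only $\Delta_{\min}$ and must strictly dominate all accumulated noise and cross-term errors; this is precisely why Condition 6 is stated with $\Delta_{\min}^2$ in the denominator and why Condition \ref{con2} (rather than just Condition \ref{con1}) is assumed.
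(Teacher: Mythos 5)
Your overall strategy is the same as the paper's: the appendix proves Theorem~3 by combining a lemma showing that $(\bX-\bmu_\eta\bone_n^T)^T(\bX-\bmu_\eta\bone_n^T)$ equals $\tr(\bSigma_1)\bI_n+\bV^T\bV+o_P(\Delta_{k-1,k})$ (your noise and cross-term controls, using exactly the same split of Condition~6 for $m\le k-2$ and Condition~2 via $\bmu_{k-1,k}^T\bSigma_{j'}\bmu_{k-1,k}\le\lambda_{j'1}\Delta_{k-1,k}$ for $m=k-1$) with a lemma identifying the eigenvectors of the signal Gram matrix as the vectors $\bu_i$, which coincide with your normalized Gram--Schmidt residues $\bb_m/\|\bb_m\|$. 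The only real difference is in how the signal eigenstructure is obtained: the paper passes to the $d\times d$ matrix $\bV\bV^T/n$, which has the same algebraic form as the population signal covariance, and recycles the iterated primal computation of its Lemmas~A.2--A.3 with $\varepsilon$ replaced by $\eta$; you diagonalize the $n\times n$ dual matrix $\sum_m\Delta_{m,m+1}\boldsymbol{\alpha}_m\boldsymbol{\alpha}_m^T$ directly. Both routes are legitimate and essentially equivalent.

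One step in your write-up is stated too strongly and is precisely where the real work lies. You claim $\bE$ (the cross-signal block with entries $\alpha_m(j)\alpha_{m'}(j')\bmu_{m,m+1}^T\bmu_{m',m'+1}$, $m\neq m'$) is ``of smaller order than each signal component.'' Condition~5 only gives $\bmu_{m,m+1}^T\bmu_{m',m'+1}=o(\sqrt{\Delta_{m,m+1}\Delta_{m',m'+1}})$, so in operator norm $\bE$ can be far larger than the smaller signal scale $\Delta_{m',m'+1}$ (since $\Delta_{m,m+1}\gg\Delta_{m',m'+1}$); a naive Weyl bound therefore does not isolate the lower eigenpairs. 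What saves the argument is that these couplings are off-diagonal with respect to the (asymptotically orthogonal) signal directions, so they perturb the $m'$-th eigenvalue only at second order, by $o(\Delta_{m,m+1}\Delta_{m',m'+1})/\Delta_{m,m+1}=o(\Delta_{m',m'+1})$, and tilt the eigenvector by $o(\sqrt{\Delta_{m',m'+1}/\Delta_{m,m+1}})=o(1)$. This second-order bookkeeping, iterated down to $m=k-1$, is exactly the content of the paper's Lemma~A.2 (its displays (4)--(8)); you flag it as ``the main obstacle'' but should carry it out rather than absorb it into $\bE$. With that repaired, and the sign convention $\hat{\bu}_i^T\bz_i\ge 0$ invoked to fix the orientation, your proof is complete and your limiting values agree with (\ref{3.2}).
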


\section{Real data examples}
\subsection{Clustering when $k=2$}
We analyzed gene expression data by \citet{Chiaretti:2004} in which the data set consists of $12625\ (=d)$ genes and $128$ samples.
The data set has two tumor cellular subtypes, $\Pi_1:$ B-cell (95 samples) and $\Pi_2:$ T-cell (33 samples). 
Refer to \citet{Jeffery:2006} as well. 
We considered three cases: 
(a) $n=10$ samples consist of the first 5 samples both from $\Pi_1$ and $\Pi_2$ (i.e. $n_1=5$ and $n_2=5$);
(b) $n=40$ samples consist of the first 20 samples both from $\Pi_1$ and $\Pi_2$ (i.e. $n_1=20$ and $n_2=20$); and
(c) $n=128$ samples consist of $n_1=95$ samples from $\Pi_1$ and $n_2=33$ samples from $\Pi_2$. 
In the top panels of Fig. 4, we displayed scatter plots of the first two PC scores, $(\hat{z}_{1j},\hat{z}_{2j})$s, for (a), (b) and (c). 
From Corollary~\ref{cor1}, we denoted $(\eta_2/\eta_1)^{1/2}$ and $-(\eta_1/\eta_2)^{1/2}$ by dotted lines. 
For (a), we observed that the estimated PC scores give good performances. 
The first PC scores gathered around $(\eta_2/\eta_1)^{1/2}$ or $-(\eta_1/\eta_2)^{1/2}$. 
For (b), the estimated PC scores gave adequate performances except for the two points from $\Pi_2$.
Those two samples, which are the ninth and twentieth samples of $\Pi_2$, are probably outliers. 
In fact, the two points are far from the cluster of $\Pi_2$. 
The other $38$ samples were perfectly classified into the two groups by the sign of the first PC scores. 
As for (c), although there seemed to be two clusters except for the two samples, we could not classify the data set by the sign of the first PC scores. 
This is probably because $\eta_1$ and $\eta_2$ are unbalanced and $n$ is large. 
From (\ref{1.2}), when the mixing proportions are unbalanced, $\lambda_1$ becomes small. 
The first eigenspace was possibly affected by the other eigenspaces so that the first PC scores appear in the wrong direction.
We tested the clustering except for the outlying two samples. 
We used the remaining $31$ samples for $\Pi_2$.
We considered three cases for samples from $\Pi_1$: 
(d) the first $16$ samples from $\Pi_1$, so that $n_1=16,\ n_2=31,\ n=47$ and $\eta_1/\eta_2\approx 0.5$; 
(e) the first $31$ samples from $\Pi_1$, so that $n_1=31,\ n_2=31,\ n=62$ and $\eta_1/\eta_2=1$; and
(f) the first $62$ samples from $\Pi_1$, so that $n_1=62,\ n_2=31,\ n=93$ and $\eta_1/\eta_2=2$. 
In the bottom panels of Fig. 4, we displayed scatter plots of $(\hat{z}_{1j},\hat{z}_{2j})$s for (d), (e) and (f).
For (d) and (e), we observed that the estimated PC scores give good performances. 
As for (f), although there seemed to be two clusters, we could not classify the data set by the sign of the first PC scores. 
$\eta_1$ and $\eta_2$ are unbalanced in (d) and (f).
Even though (d) is an unbalanced case, the estimated PC scores worked well for the case. 
We had an estimate of the ratio of the first eigenvalues, $\lambda_{11}/\lambda_{21}$, as $1.598$ by the noise-reduction methodology given by \citet{Yata:2012}. 
The first eigenspace of $\bSigma$ in (d) is less affected by the first eigenspace of $\bSigma_i$s than in (f) since $\bSigma=\varepsilon_1 \varepsilon_{2}\bmu_{1,2}\bmu_{1,2}^T+\varepsilon_1\bSigma_1+\varepsilon_2\bSigma_2$.
This is probably the reason why the estimated PC scores gave good performances even in (d).
\begin{figure}
\centering
\includegraphics[scale=0.39]{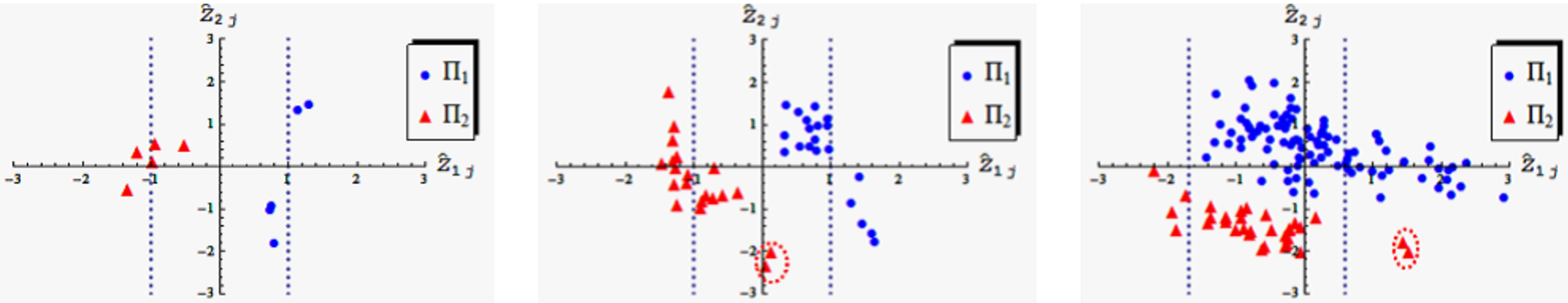}
\\[0.01mm]
(a) $(n_1,n_2)=(5,5)$ \hspace{1.3cm} (b) $(n_1,n_2)=(20,20)$ \hspace{1.2cm} (c) $(n_1,n_2)=(95,33)$ 
\\[2mm]
\includegraphics[scale=0.39]{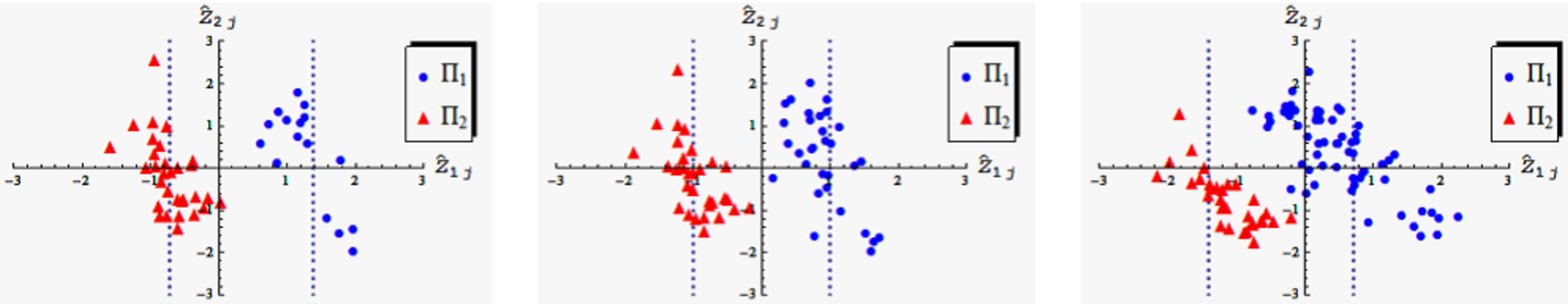}
\\[0.01mm]
(d) $(n_1,n_2)=(16,31)$ \hspace{1.cm} (e) $(n_1,n_2)=(31,31)$ \hspace{1.2cm} (f) $(n_1,n_2)=(62,31)$
\caption{We displayed scatter plots of the first two PC scores, supposing $k=2$ in the data set of \citet{Chiaretti:2004}. 
We denoted them by small circles when $\bx_j\in \Pi_1$ and by small triangles when $\bx_j\in \Pi_2$. 
The theoretical convergent points, $(\eta_2/\eta_1)^{1/2}$ and $-(\eta_1/\eta_2)^{1/2}$, are denoted by dotted lines. 
The two samples, encircled by dots in (b) and (c), are probably outliers. 
}
\end{figure}
\subsection{Clustering when $k\ge 3$}
We analyzed gene expression data by \citet{Pomeroy:2002} in which the data set consists of five brain tumor types.  
However, we only used $4$ classes given in the CRAN R package `rda' in which the data set consists of $5597\ (=d)$ genes and $34$ samples.
We set the four tumor types as $\Pi_1:$ medulloblastomas (10 samples), $\Pi_2:$ malignant gliomas (10 samples), $\Pi_3:$ normal cerebellums (4 samples) and $\Pi_4:$ AT/RT (10 samples). 
We first considered the case when $k=3:\ \Pi_i,\ i=1,2,3$, so that $n_1=10,\ n_2=10,\ n_3=4$ and $n=24$. 
In the left panel of Fig. 5, we displayed scatter plots of the first two PC scores, $(\hat{z}_{1j},\hat{z}_{2j})$s. 
From Theorem~\ref{thm3}, we displayed the triangle given by (\ref{3.2}) with $k=3$.
Although there seemed to be three clusters, we could not observe that they gather around each vertex. 
This is probably because the rate of convergence is slow because of small $d$ compared to such large $n$ when $k\ge 3$. 
We tested the clustering with a small sample size: the first $5$ samples both from $\Pi_1$ and $\Pi_2$ and the last $2$ samples from $\Pi_3$, so that $n_1=5,\ n_2=5,\ n_3=2$ and $n=12$. 
We displayed the results in the right panel of Fig. 5. 
They seemed to be classified into three classes around each vertex.
\begin{figure}[h]
\centering
\includegraphics[scale=0.43]{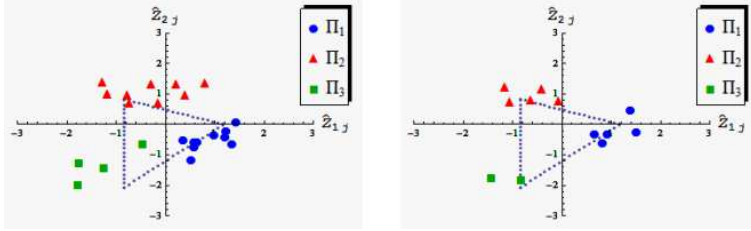}
\\[0.01mm]
(i) $(n_1,n_2,n_3)=(10,10,4)$ \hspace{1.65cm} (ii) $(n_1,n_2,n_3)=(5,5,2)$
\\[0.01mm]
\caption{We displayed scatter plots of the first two PC scores, supposing $k=3$ in the data set of \citet{Pomeroy:2002}. 
We denoted them by small circles when $\bx_j\in \Pi_1$, by small triangles when $\bx_j\in \Pi_2$ and by small squares when $\bx_j\in \Pi_3$. 
The theoretical convergent points are denoted by the vertices of the triangle.}
\end{figure}

Next, we considered the case when $k=4:\ \Pi_i,\ i=1,...,4$, so that $n_1=10,\ n_2=10,\ n_3=4,\ n_4=10$ and $n=34$. 
In Fig. 6, we displayed scatter plots of the first three PC scores. 
Although there seemed to be four clusters of each $\Pi_i$, the data set seemed not to hold the consistency property given by (\ref{3.2}) in Theorem~\ref{thm3}. 
This is probably because some of Conditions 2 to 6 in Theorem~\ref{thm3} are not met because of such large $k$. 
\begin{figure}
\centering
\includegraphics[scale=0.39]{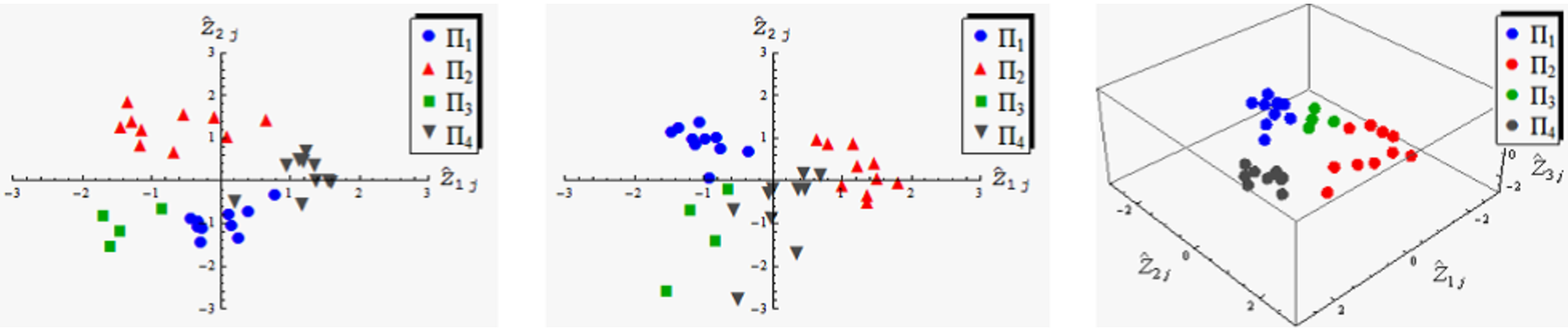}
\\[0.01mm]
(i) $(\hat{z}_{1j},\hat{z}_{2j})$ \hspace{2.2cm} (ii) $(\hat{z}_{2j},\hat{z}_{3j})$ \hspace{2cm} (iii) $(\hat{z}_{1j},\hat{z}_{2j},\hat{z}_{3j})$
\caption{We displayed scatter plots of the first three PC scores, supposing $k=4$ in the data set of \citet{Pomeroy:2002}.}
\end{figure}
\subsection{Clustering: Special case}
We analyzed gene expression data by \citet{Armstrong:2002} in which the data set consists of three leukemia subtypes having $12582\ (=d)$ genes. 
We used $2$ classes such as $\Pi_1$: acute lymphoblastic leukemia ($24$ samples) and $\Pi_2$: mixed-lineage leukemia ($20$ samples), so that $n_1=24,\ n_2=20$ and $n=44$. 
In Fig. 7, we displayed scatter plots of the first three PC scores. 
\begin{figure}[h]
\centering
\includegraphics[scale=0.39]{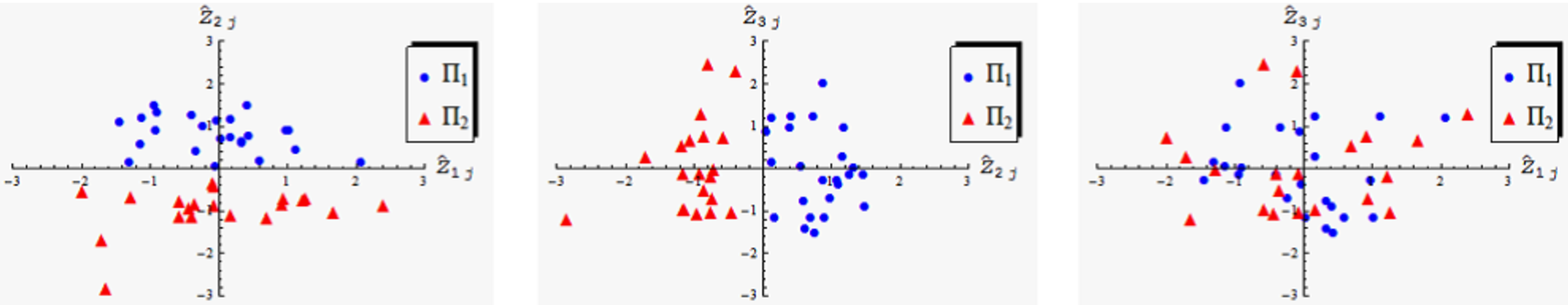}
\\[0.01mm]
(i) $(\hat{z}_{1j},\hat{z}_{2j})$ \hspace{2.2cm} (ii) $(\hat{z}_{2j},\hat{z}_{3j})$ \hspace{2.2cm} (iii) $(\hat{z}_{1j},\hat{z}_{3j})$
\caption{We displayed scatter plots of the first three PC scores, supposing $k=2$ in the data set of \citet{Armstrong:2002}.}
\end{figure}
We observed that the data set is perfectly separated by the sign of the second PC scores. 
This figure looks completely different from Fig. 4. 
This is probably because the largest eigenvalue, $\lambda_{11}$ or $\lambda_{21}$, is too large. 
When $k=2$, we give the following result to explain the reason of the phenomenon in Fig. 7.
Under the assumptions of Proposition~\ref{pro1}, one can classify $\bx_j$s into two groups by some $i$-th PC score even when Condition 1 is not met:
\begin{proposition}
\label{pro1}
Assume $\max_{i=1,2}\bmu_{1,2}^T\bSigma_i\bmu_{1,2}/\Delta_{1,2}^2 \to 0$ as $d\to \infty$. 
Then, there exists some positive integer $i_{\star}$ such that 
$$
\frac{\lambda_{i_{\star}}}{\varepsilon_1\varepsilon_2 \Delta_{1,2}}\to 1\quad \mbox{as $d\to \infty$}.
$$
Furthermore, assume that $\lambda_{i_{\star}}$ is distinct in the sense that $\liminf_{d\to \infty} |{\lambda_{i'}}/{\lambda_{i_{\star}}}-1 |>0$ for $i'=1,...,d\ (i'\neq i_{\star})$.
Then, if $\bh_{i_{\star}}^T\bmu_{1,2} \ge 0$, it holds that Angle$(\bh_{i_{\star}},\bmu_{1,2})\to 0$ as $d\to \infty$ and for $j=1,...,n$
\begin{eqnarray}
\plim_{d\to \infty} \frac{s_{i_{\star}j} }{ \lambda_{i_{\star}}^{1/2} }= \left\{ \begin{array}{ll}
\sqrt{\varepsilon_2/\varepsilon_1} & \mbox{ when } \bx_j\in \Pi_1, \\[1mm]
-\sqrt{\varepsilon_1/\varepsilon_2} & \mbox{ when } \bx_j\in \Pi_2. 
\end{array} \right. \notag 
\end{eqnarray}
\end{proposition}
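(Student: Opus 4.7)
The plan is to exploit the rank-one decomposition $\bSigma = \alpha\bv\bv^T + \bW$, with $\bv = \bmu_{1,2}/\Delta_{1,2}^{1/2}$ the unit vector along $\bmu_{1,2}$, $\alpha = \varepsilon_1\varepsilon_2\Delta_{1,2}$, and $\bW = \varepsilon_1\bSigma_1 + \varepsilon_2\bSigma_2$. The stated hypothesis is equivalent to $\bv^T\bW\bv = o(\alpha)$, so that $\bv^T\bSigma\bv = \alpha(1+o(1))$: the direction $\bv$ is a near-eigenvector of $\bSigma$ corresponding to the eigenvalue $\alpha$.

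For the existence of $i_\star$, I would invoke the secular equation for a rank-one positive semidefinite perturbation. Any eigenvalue $\lambda$ of $\bSigma$ on whose eigenspace $\bv$ has nontrivial projection satisfies $\alpha\bv^T(\lambda\bI_d - \bW)^{-1}\bv = 1$. Expanding $\bv = \sum_j\beta_j\bw_j$ in an orthonormal eigenbasis of $\bW$ with eigenvalues $\mu_j$, this becomes $\sum_j\beta_j^2/(\lambda/\alpha - \mu_j/\alpha) = 1$, supplemented by $\sum_j\beta_j^2 = 1$ and the moment bound $\sum_j\beta_j^2\mu_j/\alpha = o(1)$. Partitioning the indices according as $\mu_j/\alpha$ is small, moderate, or large, the moment bound forces the $\beta_j^2$-mass on the moderate and large ranges to vanish; in the remaining regime the equation is well approximated by $\sum_j\beta_j^2/(\lambda/\alpha) = 1$, yielding a root $\lambda = \alpha(1+o(1))$. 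Cauchy interlacing for rank-one PSD perturbations then guarantees that this root is genuinely an eigenvalue $\lambda_{i_\star}$ of $\bSigma$.

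Under the distinctness hypothesis the spectral gap isolates $\lambda_{i_\star}$, which lets me pin down $\bh_{i_\star}$. Expanding $\bv = \sum_i b_i\bh_i$ and combining $\sum_i b_i^2 = 1$ with $\sum_i b_i^2\lambda_i = \alpha(1+o(1))$, $\lambda_{i_\star}/\alpha\to 1$, and $|\lambda_i/\lambda_{i_\star}-1|\ge c>0$ for $i\neq i_\star$, a weighted-average argument forces $b_{i_\star}^2\to 1$; the sign convention $\bh_{i_\star}^T\bmu_{1,2}\ge 0$ then gives $\mbox{Angle}(\bh_{i_\star},\bmu_{1,2})\to 0$. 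For the PC score limits, the derivation parallels that of (\ref{1.3}) and Corollary~\ref{cor1}: writing $\bh_{i_\star} = b_{i_\star}\bv + (1-b_{i_\star}^2)^{1/2}\bu$ with $\bu\perp\bv$, the conditional mean of $\bh_{i_\star}^T(\bx_j-\bmu)$ on $\{\bx_j\in\Pi_i\}$ is $(-1)^{i+1}\varepsilon_{3-i}\Delta_{1,2}^{1/2}(1+o(1))$, while the conditional variance $\bh_{i_\star}^T\bSigma_i\bh_{i_\star}$ is $o(\lambda_{i_\star})$ thanks to $\bv^T\bSigma_i\bv = o(\alpha)$ combined with the rate of $b_{i_\star}\to 1$ coming from the spectral gap. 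Dividing by $\lambda_{i_\star}^{1/2}\sim\alpha^{1/2}$ delivers the stated limits $\pm\sqrt{\varepsilon_{3-i}/\varepsilon_i}$.

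The main obstacle is the existence step in the second paragraph. The hypothesis $\bv^T\bW\bv = o(\alpha)$ is strictly weaker than $\|\bW\bv\| = o(\alpha)$; in particular $\bv^T\bW^2\bv$ is not controlled, so a direct Bauer-Fike residual bound cannot localize an eigenvalue of $\bSigma$ near $\alpha$. The full rank-one structure must be exploited through the secular equation to rule out the delicate scenario where eigenvalues of $\bW$ cluster near $\alpha$ with small but positive $\beta_j^2$, which could otherwise prevent concentration of the root near $\alpha$.
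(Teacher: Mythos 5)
Your setup is the same as the paper's: write $\bSigma=\alpha\bv\bv^T+\bW$ with $\bW=\varepsilon_1\bSigma_1+\varepsilon_2\bSigma_2$, $\alpha=\varepsilon_1\varepsilon_2\Delta_{1,2}$, and exploit the moment bound $\bv^T\bW\bv=o(\alpha)$ (the paper's display (14)). However, two of your three steps have genuine gaps. First, the existence step: you candidly flag it as ``the main obstacle'' and do not close it. The scenario you worry about --- eigenvalues $\mu_j$ of $\bW$ within $o(\alpha)$ of $\alpha$ carrying small positive weight $\beta_j^2$ --- is real, and your small/moderate/large partition does not dispose of it because the corresponding denominators in the secular sum are uncontrolled. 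One way to close it is a counting argument: for fixed $\tau>0$, either every eigenvalue of $\bW$ exceeding $\alpha$ also exceeds $(1+\tau)\alpha$, in which case the secular function evaluated at $(1+\tau)\alpha$ is positive (all positive terms then have denominators at least $\tau\alpha$ and the moment bound applies), or some eigenvalue of $\bW$ lies in $(\alpha,(1+\tau)\alpha]$ and interlacing alone caps the number of eigenvalues of $\bSigma$ above $(1+\tau)\alpha$; a symmetric argument gives the matching lower bound. Equivalently, and closer to the paper, set $i_*=\#\{j:\mu_j>\alpha\}$, note that (14) makes the projection of $\bv$ onto the top-$i_*$ eigenspace of $\bW$ vanish, and apply Courant--Fischer with test subspaces built from that eigenspace and the residual component of $\bv$. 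You need one of these arguments; as written your sketch does not produce a root near $\alpha$.

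Second, and more seriously, your eigenvector step is incorrect as stated. The constraints $\sum_ib_i^2=1$, $\sum_ib_i^2\lambda_i=\alpha(1+o(1))$, $\lambda_{i_\star}/\alpha\to1$ and $|\lambda_i/\lambda_{i_\star}-1|\ge c$ for $i\ne i_\star$ do \emph{not} force $b_{i_\star}^2\to1$: placing mass $1/(1+c)$ on an eigenvalue equal to $(1+c)\alpha$ and mass $c/(1+c)$ on eigenvalue $0$ satisfies all four with $b_{i_\star}=0$. A first moment plus a spectral gap cannot exclude cancellation between mass above and below $\alpha$, and this matters precisely in the regime the proposition targets, where $\lambda_1\gg\alpha$. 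The fix must reuse the rank-one structure: $\bSigma\bh_i=\lambda_i\bh_i$ gives $b_i(\lambda_i-\alpha)=\bh_i^T\bW\bv$, hence $b_i^2(\lambda_i-\alpha)^2\le(\bh_i^T\bW\bh_i)(\bv^T\bW\bv)=(\lambda_i-\alpha b_i^2)\,o(\alpha)$, and it is this bound (or, as in the paper, the expansion of $\bv$ in the $\bW$-eigenbasis via (14)), not the first moment, that kills the off-$i_\star$ mass. Finally, for the score variance you should not need a rate on $b_{i_\star}\to1$ (which you would require to beat $\lambda_{i1}\gg\alpha$, since Condition 1 is not assumed here): the identity $\varepsilon_1\bh_{i_\star}^T\bSigma_1\bh_{i_\star}+\varepsilon_2\bh_{i_\star}^T\bSigma_2\bh_{i_\star}=\bh_{i_\star}^T\bSigma\bh_{i_\star}-\alpha b_{i_\star}^2=\lambda_{i_\star}-\alpha b_{i_\star}^2=o(\alpha)$ delivers $\bh_{i_\star}^T\bSigma_i\bh_{i_\star}=o(\lambda_{i_\star})$ directly from $b_{i_\star}^2\to1$.
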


We estimated the largest eigenvalue by using the noise-reduction methodology given by \citet{Yata:2012}.
We estimated $\Delta_{1,2}$ by using an unbiased estimator given by \citet{Aoshima:2014}. 
Then, we obtained the estimates of $(\lambda_{11}/\Delta_{1,2},\lambda_{21}/\Delta_{1,2})$ as $(0.465,0.787)$, so that Condition 1 is not met obviously. 
In addition, by estimating $\varepsilon_i$s by $\eta_i$s, we had $\varepsilon_{2} \lambda_{21}>\varepsilon_1 \varepsilon_{2} \Delta_{1,2}$. 
Thus, the first eigenspace of $\bSigma$ is probably the first eigenspace of $\bSigma_2$ since $\bSigma=\varepsilon_1 \varepsilon_{2}\bmu_{1,2}\bmu_{1,2}^T+\varepsilon_1\bSigma_1+\varepsilon_2\bSigma_2$. 
We conclude that $i_{\star}$ in Proposition~\ref{pro1} must be $2$.
This is the reason why the data set can be separated by the sign of the second PC scores in Fig 7. 
\section{Concluding remarks}
In this paper, we considered the mixture model by (\ref{1.1}) in the HDLSS context such as $d\to \infty$ while $n$ is fixed. 
We studied asymptotic properties both of the true PC scores and the sample PC scores for the mixture model. 
We gave theoretical reasons why PCA is effective for clustering HDLSS data and we showed that HDLSS data can be classified by the sign of the first several PC scores theoretically. 
However, we have to say, in actual HDLSS data analyses, one may encounter cases such as in Figs. 4(c) and 7 where the data set is not always classified by the sign of the first several PC scores.
Several reasons should be considered:
(i) Actual HDLSS data sets often include several outliers;
(ii) The regularity conditions are not met;
and (iii) $d$ is not sufficiently large. 
Thus, we recommend the following three steps: 
(I) Apply PCA to HDLSS data;
(II) By using PC scores, map the data set onto a feature space such as the first three eigenspaces; 
and (III) Apply general clustering methods such as the $k$-means method to the feature space.

We are now investigating the theory further and hope to bring it closer to the results of actual analysis.
\section*{Acknowledgement}
Research of the first author was partially supported by Grant-in-Aid for Young Scientists (B), Japan Society for the Promotion of Science (JSPS), under Contract Number 26800078.
Research of the second author was partially supported by Grants-in-Aid for Scientific Research (B) and 
Challenging Exploratory Research, JSPS, under Contract Numbers 22300094 and 26540010. 

\appendix
\section{Appendix}
Throughout, let $\bu_i=(u_{i1},...,u_{in})^T$, where 
$$ 
{u}_{ij}= \left\{ 
\begin{array}{ll}
0 & \mbox{ when $i \ge 2$ and } \bx_j\in \bigcup_{m=1}^{i-1}\Pi_m, \\[1mm]
 \sqrt{(1-\eta_{(i)})/\{n\eta_i(1-\eta_{(i-1)})\}} & \mbox{ when } \bx_j \in  \Pi_i, \\[1mm]
-\sqrt{\eta_{i}/\{n(1-\eta_{(i)})(1-\eta_{(i-1)}) \}} & \mbox{ when } \bx_j\in \bigcup_{m=i+1}^{k}\Pi_m 
\end{array} \right. 
$$
for $i=1,...,k-1;\ j=1,...,n$. 
Let $\bnu_i=\sum_{m=1}^k\eta_m(\bmu_i-\bmu_m)$ for $i=1,...,k$. 
Let $\bV=(\bnu_{(1)},...,\bnu_{(n)})$, where $\bnu_{(j)}=\bnu_{i}$ according to $\bx_j\in \Pi_i$ for $j=1,...,n$. 
Note that $\bV \bone_n =\sum_{j=1}^n\bnu_{(j)}=\bze$. 
We define the eigen-decomposition of $\bV^T\bV/n$ by $\bV^T\bV/n=\sum_{i=1}^{k-1}\tilde{\lambda}_{i}\tilde{\bu}_{i}\tilde{\bu}_{i}^T $ from the fact that rank$(\bV)\le k-1$, where $\tilde{\lambda}_{1}\ge\cdots\ge \tilde{\lambda}_{k-1}\ge 0$ are eigenvalues of $\bV^T\bV/n$ and $\tilde{\bu}_{i}=(\tilde{u}_{i1},...,\tilde{u}_{in})^T$ is a unit eigenvector corresponding to $\tilde{\lambda}_{i}$ for each $i$. 
We assume $\tilde{\bu}_i^T{\bu}_i \ge 0$ for $i=1,...,k-1$, without loss of generality.  
\subsection{Lemmas and their proofs}
\begin{lemma}
\label{lem1}
When $k=2$, it holds that under Conditions 2 to 4 
$$
\plim_{d\to \infty}\frac{(n-1)\bS_D-\tr(\bSigma_1)\bP_n}{\Delta_{1,2}}= \br \br^T.
$$ 
\end{lemma}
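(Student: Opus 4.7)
The plan is to decompose the centered data into a between-class mean shift plus within-class residuals, isolate the dominant rank-one signal in $\bX_0^T\bX_0$, and use Conditions 2--4 to make every other term negligible entrywise. Write $\bx_j=\bmu_{(j)}+\by_j$, where $\bmu_{(j)}$ is the mean of the population containing $\bx_j$ and $\by_j$ is mean-zero with covariance $\bSigma_{(j)}$. Since $\bmu=\varepsilon_1\bmu_1+\varepsilon_2\bmu_2$, one has $\bmu_{(j)}-\bmu=w_j\bmu_{1,2}$ with $w_j=\varepsilon_2$ if $\bx_j\in\Pi_1$ and $w_j=-\varepsilon_1$ if $\bx_j\in\Pi_2$. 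Setting $\bw=(w_1,\dots,w_n)^T$ and $\bY=(\by_1,\dots,\by_n)$, this gives $\bX_0=\bmu_{1,2}\bw^T+\bY$, and expanding yields
$$\bX_0^T\bX_0 \;=\; \Delta_{1,2}\,\bw\bw^T+\bw\,\bmu_{1,2}^T\bY+\bY^T\bmu_{1,2}\,\bw^T+\bY^T\bY.$$

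Next I would divide by $\Delta_{1,2}$ and bound each nonsignal term entrywise by Chebyshev (legitimate since $n$ is fixed). A generic entry of the cross term is $w_i\bmu_{1,2}^T\by_j$, with $|w_i|\le 1$ and $\var(\bmu_{1,2}^T\by_j)=\bmu_{1,2}^T\bSigma_{(j)}\bmu_{1,2}\le\lambda_{(j)1}\Delta_{1,2}$, so its normalized variance is at most $\lambda_{(j)1}/\Delta_{1,2}\to 0$ by Condition 2 (which subsumes Condition 1 since $\lambda_{i1}^2\le\tr(\bSigma_i^2)$). The off-diagonal entries of $\bY^T\bY$ have mean zero by independence and variance $\tr(\bSigma_{(i)}\bSigma_{(j)})\le\max_i\tr(\bSigma_i^2)=o(\Delta_{1,2}^2)$, again by Condition 2. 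The diagonal $\|\by_i\|^2$ has mean $\tr(\bSigma_{(i)})$ and, by Condition 3, variance $o(\Delta_{1,2}^2)$; Condition 4 then replaces $\tr(\bSigma_{(i)})$ by $\tr(\bSigma_1)$ uniformly in $i$. Combining all three estimates gives, entrywise,
$$\frac{\bX_0^T\bX_0}{\Delta_{1,2}} \;=\; \bw\bw^T+\frac{\tr(\bSigma_1)}{\Delta_{1,2}}\,\bI_n+o_p(1).$$

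Sandwiching with $\bP_n$ and using $\bP_n^2=\bP_n$ yields
$$\frac{(n-1)\bS_D}{\Delta_{1,2}} \;=\; (\bP_n\bw)(\bP_n\bw)^T+\frac{\tr(\bSigma_1)}{\Delta_{1,2}}\,\bP_n+o_p(1).$$
With $\bar w=n^{-1}\sum_j w_j=\eta_1\varepsilon_2-\eta_2\varepsilon_1$, a short calculation gives $(\bw-\bar w\bone_n)_j=\varepsilon_2(1-\eta_1)+\eta_2\varepsilon_1=\eta_2$ for $\bx_j\in\Pi_1$ and $-\eta_1$ for $\bx_j\in\Pi_2$; that is, $\bP_n\bw=\br$. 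Rearranging finishes the proof. The technical core is simply matching the three error families to the three conditions: Condition 2 handles the cross term and the off-diagonal of $\bY^T\bY$, Condition 3 controls the diagonal variance, and Condition 4 homogenizes the traces across classes. The small algebraic observation that turns the \emph{population} weights $\varepsilon_i$ appearing in $\bw$ into the \emph{sample} weights $\eta_i$ appearing in $\br$ is exactly the identity $\bP_n\bw=\br$; this is the only point requiring care beyond the routine variance bookkeeping.
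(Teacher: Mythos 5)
Your proof is correct and follows essentially the same route as the paper: an entrywise Chebyshev argument with exactly the same matching of Condition 2 to the cross terms and off-diagonal Gram entries, Condition 3 to the diagonal, and Condition 4 to homogenizing the traces. The only difference is that you center at the population mean $\bmu=\varepsilon_1\bmu_1+\varepsilon_2\bmu_2$ and recover the sample proportions via the (correctly verified) identity $\bP_n\bw=\br$, whereas the paper centers directly at $\bmu_{\eta}=\eta_1\bmu_1+\eta_2\bmu_2$ so that $\br$ appears immediately and $\bP_n$ acts as the identity on it.
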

\begin{proof}
Let $\bmu_{\eta}=\eta_1\bmu_1+\eta_2 \bmu_2 $. 
Then, we can write that $\bx_j-\bmu_{\eta}=(\bx_j-\bmu_i)+(-1)^{i+1} (1-\eta_i)\bmu_{1,2}$ for $j=1,...,n;\ i=1,2$. 
From the fact that $\lambda_{i1} \le \tr(\bSigma_i^2)^{1/2}$, we have that $\var\{(\bx_j-\bmu_i)^T\bmu_{1,2}|\bx_j \in \Pi_i \}=\bmu_{1,2}^T \bSigma_i\bmu_{1,2}\le \Delta_{1,2}\lambda_{i1}=o(\Delta_{1,2}^2)$ as $d\to \infty$ for $j=1,...,n;\ i=1,2$ under Condition 2. 
Also, we have that $\var\{(\bx_j-\bmu_i)^T(\bx_{j'}-\bmu_{i'}) |\bx_j \in \Pi_i, \bx_{j'} \in \Pi_{i'} \}=\tr(\bSigma_i\bSigma_{i'})\le \tr(\bSigma_i^{2})^{1/2}\tr(\bSigma_{i'}^{2})^{1/2}=o(\Delta_{1,2}^2)$ for all $j \neq j'$ and $i,i'=1,2$ under Condition 2. 
Then, by using Chebyshev's inequality, for any $\tau>0$, under Condition 2, it holds that for all $j \neq j'$ and $i,i'=1,2$
\begin{align}
&P \{|(\bx_{j}-\bmu_i)^T (\bx_{j'}-\bmu_{i'})/\Delta_{1,2}| >\tau | \bx_{j}\in \Pi_i,\bx_{j'}\in \Pi_{i'} \}=o(1) \ \mbox{and} \notag \\
&P \{|(\bx_{j}-\bmu_i)^T \bmu_{1,2}/\Delta_{1,2}|>\tau | \bx_{j}\in \Pi_i \}=o(1), 
\label{1}
\end{align}
so that $(\bx_{j}-\bmu_i)^T (\bx_{j'}-\bmu_{i'})/\Delta_{1,2}=o_P(1) $ and $(\bx_{j}-\bmu_i)^T\bmu_{1,2}/\Delta_{1,2}=o_P(1)$ when $\bx_{j}\in \Pi_i$ and $\bx_{j'}\in \Pi_{i'}$ ($j \neq j'$). 
We note that $E(||\bx_{j}-\bmu_i||^2 |\bx_{j}\in \Pi_i)=\tr(\bSigma_i)$. 
Similar to (\ref{1}), under Condition 3, it holds that $\{||\bx_{j}-\bmu_i||^2-\tr(\bSigma_i) \}/\Delta_{1,2}=o_P(1)$ when $\bx_{j}\in \Pi_i$ for $j=1,...,n;$ $i=1,2$.
By noting that $\{\tr(\bSigma_1)-\tr(\bSigma_2)\}/\Delta_{1,2}=o(1)$ under Condition 4, we have that
\begin{equation}
\plim_{d\to \infty} \frac{(\bX-\bmu_{\eta} \bone_n^T)^T(\bX-\bmu_{\eta} \bone_n^T)-\tr(\bSigma_1)\bI_n }{\Delta_{1,2}}= \br \br^T 
\notag
\end{equation}
under Conditions 2 to 4.
By noting that $\bP_n(\bX-\bmu_{\eta} \bone_n^T)^T(\bX-\bmu_{\eta} \bone_n^T) \bP_n/(n-1)=\bS_D$ and $\br^T \bP_n=\br^T$ from $\br^T\bone_{n}=\bze$, we conclude the result.
\end{proof}
\begin{lemma}
\label{lem2}
Let $\acute{\bmu}_{i,i+1}=\bmu_{i,i+1}/\Delta_{i,i+1}^{1/2}$ for $i=1,...,k-1$, and let $\Delta_{(i,j)}=\Delta_{j,j+1}/\Delta_{i,i+1}$ for $i,j=1,...,k-1\ (i<j)$.
Under Conditions 1 and 5, it holds that as $d\to \infty$
\begin{align*}
&\frac{\lambda_i}{\Delta_{i,i+1}}=\frac{\varepsilon_{i}(1-\varepsilon_{(i)})}{1-\varepsilon_{(i-1)}}+o(1)\quad \mbox{and}\quad \bh_i^T\acute{\bmu}_{i,i+1}=1+o(1)\quad \mbox{for  $i=1,...,k-1$};\\
&\bh_{i}^T\acute{\bmu}_{i-1,i}=
-\frac{1-\varepsilon_{(i)}}{1-\varepsilon_{(i-1)}}\Delta_{(i-1,i)}^{1/2}\{1+o(1)\}\quad \mbox{for  $i=2,...,k-1$ when $k\ge 3$};\ and\\
&\bh_{j}^T \acute{\bmu}_{i,i+1}=o(\Delta_{(i,j)}^{1/2}) \quad \mbox{for  $i,j=1,...,k-1$ $(i+1<j)$ when $k\ge 3$}.
\end{align*}
\end{lemma}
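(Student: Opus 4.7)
The plan is to strip away the intra-class noise $\sum_i\varepsilon_i\bSigma_i$, reduce the eigenproblem to a $(k-1)\times(k-1)$ matrix spanned by the consecutive gaps $\bmu_{1,2},\ldots,\bmu_{k-1,k}$, and then diagonalize that matrix via a recursive deflation driven by the rapid scale separation in Condition \ref{con5}. Writing $\bSigma=\bM_0+\bR$ with $\bM_0=\sum_{i<j}\varepsilon_i\varepsilon_j\bmu_{i,j}\bmu_{i,j}^T$ and $\bR=\sum_i\varepsilon_i\bSigma_i$, Condition \ref{con1} gives $\|\bR\|_{\mathrm{op}}\le\max_i\lambda_{i1}=o(\Delta_{\min})$; since Condition \ref{con5} forces $\Delta_{k-1,k}/\Delta_{\min}\to 1$, this remainder is negligible on every scale $\Delta_{i,i+1}$. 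Weyl's inequality then transfers eigenvalues and a Davis--Kahan bound transfers eigenvectors from $\bM_0$ to $\bSigma$ (the required eigengaps of $\bM_0$, of order $\Delta_{i,i+1}$, emerge from the calculation below).

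Via the telescope $\bmu_{i,j}=\sum_{l=i}^{j-1}\bmu_{l,l+1}$, I factor $\bM_0=\bA\bC\bA^T$, where $\bA=(\bmu_{1,2},\ldots,\bmu_{k-1,k})$ and the symmetric matrix $\bC$ has entries $c_{l,l'}=\varepsilon_{(\min(l,l'))}(1-\varepsilon_{(\max(l,l'))})$. Setting $\bD=\mbox{diag}(\Delta_{1,2},\ldots,\Delta_{k-1,k})$ and $\bB=\bA\bD^{-1/2}$, whose columns are the unit vectors $\acute{\bmu}_{l,l+1}$, one has $\bM_0=\bB\bK\bB^T$ with $\bK_{l,l'}=c_{l,l'}\sqrt{\Delta_{l,l+1}\Delta_{l',l'+1}}$. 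Condition \ref{con5} yields $\bB^T\bB=\bI+o(1)$ entrywise (the off-diagonals coming from the angles $\to\pi/2$), so up to negligible perturbation the nontrivial eigenvalues of $\bM_0$ coincide with those of $\bK$ and its eigenvectors are $\bh=\bB\bv$. Since $|K_{l,l'}|/K_{\min(l,l'),\min(l,l')}\to 0$ for $l\ne l'$, the matrix $\bK$ is dominated by its $(1,1)$ entry; standard block perturbation gives $\lambda_1=K_{11}(1+o(1))=\varepsilon_1(1-\varepsilon_1)\Delta_{1,2}(1+o(1))$ with top eigenvector $e_1+K_{11}^{-1}(K_{1,2},\ldots,K_{1,k-1})^T+o(1)$, whence $\bh_1^T\acute{\bmu}_{1,2}\to 1$. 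The remaining eigenvalues are those of the deflated matrix $\bK_{22}-K_{11}^{-1}\bw\bw^T$, whose $(1,1)$ entry simplifies via the algebraic identity $c_{2,2}-c_{1,2}^2/c_{1,1}=(1-\varepsilon_{(2)})\varepsilon_2/(1-\varepsilon_{(1)})$; iterating the deflation produces $\lambda_i/\Delta_{i,i+1}\to\varepsilon_i(1-\varepsilon_{(i)})/(1-\varepsilon_{(i-1)})$ and the first-order correction $\bh_i^T\acute{\bmu}_{i-1,i}\sim-(1-\varepsilon_{(i)})/(1-\varepsilon_{(i-1)})\Delta_{(i-1,i)}^{1/2}$.

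The main obstacle is the third claim, $\bh_j^T\acute{\bmu}_{i,i+1}=o(\Delta_{(i,j)}^{1/2})$ for $i+1<j$, because the naive bound from the single coupling $K_{i,j}$ gives only $O(\Delta_{(i,j)}^{1/2})$. Writing the eigenvalue equation $(\bK\bv_j)_i=\lambda_j v_{j,i}$ with $\lambda_j\ll K_{i,i}$ gives $v_{j,i}\approx -K_{i,i}^{-1}\sum_{l\ne i}K_{i,l}v_{j,l}$, and the leading contributions from $l=j$ (with $v_{j,j}\to 1$) and $l=j-1$ (with the previously computed $v_{j,j-1}\sim -(1-\varepsilon_{(j)})/(1-\varepsilon_{(j-1)})\Delta_{(j-1,j)}^{1/2}$) cancel exactly by the identity $c_{i,j}/c_{i,j-1}=(1-\varepsilon_{(j)})/(1-\varepsilon_{(j-1)})$, immediate from the formula for $c_{l,l'}$. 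Verifying that this cancellation persists at every level of the recursion, and tracking the residuals so that they really are $o$ rather than merely $O$ of the relevant scale, is where the main algebraic work of the proof lies.
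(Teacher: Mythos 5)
Your skeleton is essentially the paper's: the telescoping decomposition of the between-class part of $\bSigma$ into consecutive gaps $\bmu_{l,l+1}$ with coefficients $\varepsilon_{(\min(l,l'))}(1-\varepsilon_{(\max(l,l'))})$ is exactly the paper's equation (3); the Schur-complement identity $c_{2,2}-c_{1,2}^2/c_{1,1}=\varepsilon_2(1-\varepsilon_{(2)})/(1-\varepsilon_{(1)})$ is exactly the computation in the paper's (5); and the cancellation $c_{i,j}/c_{i,j-1}=(1-\varepsilon_{(j)})/(1-\varepsilon_{(j-1)})$ is precisely the mechanism by which the paper's system (6)--(8) yields $\bh_j^T\acute{\bmu}_{i,i+1}=o(\Delta_{(i,j)}^{1/2})$. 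The difference is that the paper never leaves $\bSigma$: it expands the exact orthogonality relations $\bh_l^T\bSigma\bh_j=0$ and solves the resulting linear system inductively, so every error term is automatically measured at the correct scale.

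That difference matters, because your two ``routine'' reduction steps fail as stated, and they fail exactly on the $o$-versus-$O$ point you flag at the end. First, the Davis--Kahan transfer from $\bM_0$ to $\bSigma$ gives $\sin\angle(\bh_j,\bh_j^{(0)})=O(\|\bR\|/\mathrm{gap}_j)=o(\Delta_{\min}/\Delta_{j,j+1})$, which for $j=k-1$ is only $o(1)$; that is not $o(\Delta_{(i,j)}^{1/2})$ for $i<j-1$, nor is it fine enough to resolve the component $\bh_{k-1}^T\acute{\bmu}_{k-2,k-1}$ of size $\Delta_{(k-2,k-1)}^{1/2}\to 0$ up to a multiplicative $1+o(1)$. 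You need the componentwise bound $|(\bh_i^{(0)})^T\bh_j|\le\|\bR\|/|\lambda_j-\lambda_i^{(0)}|=o(\Delta_{\min}/\Delta_{i,i+1})$, obtained by sandwiching $\bR$ between the two eigenvectors --- which is what the paper's expansion of $\bh_i^T\bSigma\bh_j=0$ delivers for free. Second, the step ``$\bB^T\bB=\bI+o(1)$ entrywise, so the nontrivial eigenvalues of $\bM_0$ coincide with those of $\bK$ up to negligible perturbation'' is not justified: the discarded term involves products such as $K_{2,1}(\acute{\bmu}_{1,2}^T\acute{\bmu}_{2,3})=o(\sqrt{\Delta_{1,2}\Delta_{2,3}})$, which is \emph{not} $o(\Delta_{2,3})$, and Condition 5 gives $\acute{\bmu}_{1,2}^T\acute{\bmu}_{2,3}=o(1)$ but not $o(\Delta_{(1,2)}^{1/2})$; since $\lambda_2$ must be pinned down at scale $\Delta_{2,3}\ll\Delta_{1,2}$, this error cannot be dropped before diagonalizing $\bK$ --- it washes out only through the same cancellations you invoke for the third claim, so it must be carried through the recursion. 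In short, the algebra you identify is the right algebra, but the perturbation reductions you treat as routine, together with the inductive bookkeeping you explicitly defer, constitute essentially the whole of the paper's proof.
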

\begin{proof}
Let $\be_{d}\ (\in \mathbb{R}^d)$ be an arbitrary unit vector. 
Since $\bSigma= \sum_{i=1}^{k-1} \sum_{j=i+1}^{k} \varepsilon_i \varepsilon_{j}\bmu_{i,j}\bmu_{i,j}^T$
$+\sum_{i=1}^k \varepsilon_i\bSigma_i$, it holds that as $d\to \infty$
\begin{equation}
\frac{\be_{d}^T \bSigma \be_{d}}{\Delta_{k-1,k}}=\frac{\be_{d}^T (\sum_{i=1}^{k-1} \sum_{j=i+1}^{k} \varepsilon_i \varepsilon_{j}\bmu_{i,j}\bmu_{i,j}^T)\be_{d}}{\Delta_{k-1,k}}+o(1)
\label{2}
\end{equation}
under Condition 1.
Note that $\bmu_{i,j}=\sum_{m=i}^{j-1} \bmu_{m,m+1}$ for $i,j=1,...,k\ (i<j)$. 
Thus it holds that 
\begin{align}
&\sum_{i=1}^{k-1} \sum_{j=i+1}^{k} \varepsilon_i \varepsilon_{j}\bmu_{i,j}\bmu_{i,j}^T \notag \\
&=\sum_{i=1}^{k-1}\varepsilon_{(i)}(1-\varepsilon_{(i)}) \bmu_{i,i+1}\bmu_{i,i+1}^T
+\sum_{i=1}^{k-2}\sum_{j=i+1}^{k-1} \varepsilon_{(i)}(1-\varepsilon_{(j)})(\bmu_{i,i+1}\bmu_{j,j+1}^T+\bmu_{j,j+1}\bmu_{i,i+1}^T).
\label{3}
\end{align}
From the fact that $\lambda_1=\bh_1^T\bSigma \bh_1=\max_{\be_d}(\be_d^T \bSigma \be_d)$, by combining (\ref{2}) with (\ref{3}), under Conditions 1 and 5, we have that 
$$
\frac{\lambda_1}{\Delta_{1,2}}=\max_{\be_d}\big\{\varepsilon_{(1)}(1-\varepsilon_{(1)})(\be_d^T \acute{\bmu}_{1,2})^2+o(1)\big\}=
\varepsilon_{(1)}(1-\varepsilon_{(1)})+o(1).
$$
Hence, from the assumption that $\bh_1^T\bmu_{1,2}\ge 0$, it holds that 
$\bh_1^T\acute{\bmu}_{1,2}=1+o(1)$. 

Next, we consider $\lambda_2$ and $\bh_2$. 
Note that $\acute{\bmu}_{i,i+1}^T\acute{\bmu}_{j,j+1}=o(1)$ and $\Delta_{(i,j)}=o(1)$ for $i,j=1,...,k-1\ (i<j)$ under Condition 5. 
Then, under Conditions 1 and 5, it holds that for $j\ge 2$
\begin{align}
0=\frac{\bh_1^T\bSigma \bh_j }{\Delta_{1,2}}=&\varepsilon_{(1)}(1-\varepsilon_{(1)})\{1+o(1)\}\acute{\bmu}_{1,2}^T\bh_j+\varepsilon_{(1)}(1-\varepsilon_{(2)}) \acute{\bmu}_{2,3}^T\bh_j\Delta_{(1,2)}^{1/2}
+o(\Delta_{(1,2)}^{1/2}) \notag
\end{align}
from (\ref{2})-(\ref{3}) and $\bh_1^T\acute{\bmu}_{2,3}=o(1)$, so that for $j\ge 2$
\begin{equation}
\bh_j^T\acute{\bmu}_{1,2}=-\{(1-\varepsilon_{(2)})/(1-\varepsilon_{(1)})\} \acute{\bmu}_{2,3}^T\bh_j\Delta_{(1,2)}^{1/2}+o(\Delta_{(1,2)}^{1/2}). 
\label{4}
\end{equation}
By combining (\ref{2}) with (\ref{3}) and (\ref{4}), we have that
\begin{align}
\frac{\lambda_2}{\Delta_{2,3}}&=\frac{\bh_2^T \bSigma \bh_2}{\Delta_{2,3}} \notag \\
&=\frac{\bh_2^T\{\sum_{i=1}^{2}\varepsilon_{(i)}(1-\varepsilon_{(i)})\bmu_{i,i+1}\bmu_{i,i+1}^T
+\varepsilon_{(1)}(1-\varepsilon_{(2)})(\bmu_{1,2}\bmu_{2,3}^T+\bmu_{2,3}\bmu_{1,2}^T)\}\bh_2}{\Delta_{2,3}}+o(1) \notag \\
&=\varepsilon_{(2)}(1-\varepsilon_{(2)})(\acute{\bmu}_{2,3}^T\bh_2)^2+
\varepsilon_{(1)}(1-\varepsilon_{(1)})\frac{(\acute{\bmu}_{1,2}^T\bh_2)^2}{\Delta_{(1,2)}}+2\varepsilon_{(1)}(1-\varepsilon_{(2)})\frac{(\acute{\bmu}_{1,2}^T\bh_2)(\acute{\bmu}_{2,3}^T\bh_2)}{\Delta_{(1,2)}^{1/2}}\notag \\
&\quad  +o(1) \notag \\
&=\varepsilon_{(2)}(1-\varepsilon_{(2)})-\frac{\varepsilon_{(1)}(1-\varepsilon_{(2)})^2}{1-\varepsilon_{(1)}}+o(1)
=\frac{\varepsilon_{2}(1-\varepsilon_{(2)})}{(1-\varepsilon_{(1)})}+o(1) \label{5}
\end{align}
under Conditions 1 and 5. 
Hence, from the assumption that $\bh_2^T\bmu_{2,3}\ge 0$, it holds that 
$\bh_2^T\acute{\bmu}_{2,3}=1+o(1)$. 

Next, we consider $\lambda_3$ and $\bh_3$. 
Note that $\bh_j^T\acute{\bmu}_{2,3}=o(1)$ for $j\ge 3$ from $\bh_2^T\acute{\bmu}_{2,3}=1+o(1)$. 
Then, under Conditions 1 and 5, we have that for $j\ge 3$
\begin{align}
0=\frac{\bh_1^T\bSigma \bh_j }{\Delta_{1,2}}=&\varepsilon_{(1)}(1-\varepsilon_{(1)})\{1+o(1)\}\acute{\bmu}_{1,2}^T \bh_j+\varepsilon_{(1)}(1-\varepsilon_{(2)}) \{1+o(1)\} \acute{\bmu}_{2,3}^T \bh_j\Delta_{(1,2)}^{1/2} \notag \\
&+\varepsilon_{(1)}(1-\varepsilon_{(3)}) \acute{\bmu}_{3,4}^T \bh_j\Delta_{(1,3)}^{1/2}+o(\Delta_{(1,3)}^{1/2}) \quad \mbox{and} \label{6} \\
0=\frac{\bh_2^T\bSigma \bh_j}{\Delta_{2,3}}=&\varepsilon_{(1)}(1-\varepsilon_{(1)})\frac{\bh_2^T\acute{\bmu}_{1,2} \acute{\bmu}_{1,2}^T\bh_j}{\Delta_{(1,2)}}
+\varepsilon_{(1)}(1-\varepsilon_{(2)})\frac{\bh_2^T(\acute{\bmu}_{1,2}\acute{\bmu}_{2,3}^T+\acute{\bmu}_{2,3}\acute{\bmu}_{1,2}^T)\bh_j}{\Delta_{(1,2)}^{1/2}} \notag \\
&+\varepsilon_{(1)}(1-\varepsilon_{(3)})\frac{\bh_2^T\acute{\bmu}_{1,2}\acute{\bmu}_{3,4}^T\bh_j}{\Delta_{(1,2)}^{1/2}}\Delta_{(2,3)}^{1/2}
+\varepsilon_{(2)}(1-\varepsilon_{(2)})\{1+o(1)\}\acute{\bmu}_{2,3}^T\bh_j\notag \\
&+\varepsilon_{(2)}(1-\varepsilon_{(3)}) \acute{\bmu}_{3,4}^T \bh_j\Delta_{(2,3)}^{1/2}
+o(\Delta_{(2,3)}^{1/2}) \notag \\
=&\frac{\varepsilon_{2}(1-\varepsilon_{(2)})}{1-\varepsilon_{(1)}}\{1+o(1)\}\acute{\bmu}_{2,3}^T \bh_j+
\frac{\varepsilon_{2}(1-\varepsilon_{(3)})}{1-\varepsilon_{(1)}}\acute{\bmu}_{3,4}^T \bh_j\Delta_{(2,3)}^{1/2}
+o(\Delta_{(2,3)}^{1/2}) \notag \\
&+\acute{\bmu}_{1,2}^T \bh_j\times o(\Delta_{(1,2)}^{-1/2}) 
\label{7}
\end{align}
from (\ref{2})-(\ref{4}), $\bh_1^T\acute{\bmu}_{2,3}=o(1)$, $\bh_1^T\acute{\bmu}_{3,4}=o(1)$ and $\bh_2^T\acute{\bmu}_{3,4}=o(1)$.
Then, by combining (\ref{6}) and (\ref{7}), under Conditions 1 and 5, it holds that for $j\ge 3$
\begin{align}
\bh_j^T\acute{\bmu}_{1,2}=o(\Delta_{(1,3)}^{1/2})\quad \mbox{and} \quad 
\bh_j^T\acute{\bmu}_{2,3}=-\{(1-\varepsilon_{(3)})/(1-\varepsilon_{(2)})\} \acute{\bmu}_{3,4}^T\bh_j\Delta_{(2,3)}^{1/2}+o(\Delta_{(2,3)}^{1/2}) 
\label{8}. 
\end{align}
Similar to (\ref{5}), by combining (\ref{2}) with (\ref{3}) and (\ref{8}), under Conditions 1 and 5, we have that 
\begin{align}
\frac{\lambda_3}{\Delta_{3,4}}
&=\varepsilon_{(3)}(1-\varepsilon_{(3)})(\acute{\bmu}_{3,4}^T\bh_3)^2+
\varepsilon_{(2)}(1-\varepsilon_{(2)})\frac{(\acute{\bmu}_{2,3}^T\bh_3)^2}{\Delta_{(2,3)}}+2\varepsilon_{(2)}(1-\varepsilon_{(3)})\frac{(\acute{\bmu}_{2,3}^T\bh_3)(\acute{\bmu}_{3,4}^T\bh_3)}{\Delta_{(2,3)}^{1/2}}\notag \\
&\quad  +o(1) \notag \\
&=\varepsilon_{(3)}(1-\varepsilon_{(3)})-\frac{\varepsilon_{(2)}(1-\varepsilon_{(3)})^2}{1-\varepsilon_{(2)}}+o(1)
=\frac{\varepsilon_{3}(1-\varepsilon_{(3)})}{(1-\varepsilon_{(2)})}+o(1), \notag
\end{align}
so that 
$\bh_3^T\acute{\bmu}_{3,4}=1+o(1)$ from the assumption that $\bh_3^T\bmu_{3,4}\ge 0$.

In a way similar to $\lambda_3$ and $\bh_3$, as for $\lambda_i$ and $\bh_i$ $(4\le i \le k-1)$, 
we have that $\lambda_i/\Delta_{i,i+1}=\varepsilon_{i}(1-\varepsilon_{(i)})/(1-\varepsilon_{(i-1)})+o(1)$, $\bh_i^T\acute{\bmu}_{i,i+1}= 1+o(1)$ and 
$\bh_{i}^T\acute{\bmu}_{i-1,i}=-\{(1-\varepsilon_{(i)})/(1-\varepsilon_{(i-1)})\}\Delta_{(i-1,i)}^{1/2}\{1+o(1)\}$
together with $\bh_{j}^T \acute{\bmu}_{i,i+1}=o(\Delta_{(i,j)}^{1/2})$ for $i,j=1,...,k-1$ $(i+1<j)$ under Conditions 1 and 5.
It concludes the results.
\end{proof}
\begin{lemma}
\label{lem3}
Under Conditions 1 and 5, it holds that for $i=1,...,k-1$
\begin{align*}
&\lim_{d\to \infty} \bh_i^T\sum_{m=1}^k\frac{\varepsilon_m(\bmu_{i'}-\bmu_m)}{\lambda_i^{1/2}} =\left\{\begin{array}{ll}
0 & \mbox{when } i \ge 2 \ \mbox{ and } i'<i, \\[1mm]
\sqrt{(1-\varepsilon_{(i)})/\{\varepsilon_{i}(1-\varepsilon_{(i-1)})\}} & \mbox{when } i' = i, \\[1mm]
-\sqrt{\varepsilon_{i}/\{(1-\varepsilon_{(i)})(1-\varepsilon_{(i-1)})\}} & \mbox{when } i' > i. 
\end{array}\right.
\end{align*} 
\end{lemma}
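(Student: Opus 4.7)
My plan is to reduce $\sum_{m=1}^k\varepsilon_m(\bmu_{i'}-\bmu_m)$ to a linear combination of the consecutive differences $\bmu_{l,l+1}$, because Lemma 2 already controls $\bh_i^T\bmu_{l,l+1}$ for every $l$ under Conditions 1 and 5. By telescoping, $\bmu_{i'}-\bmu_m=-\sum_{l=m}^{i'-1}\bmu_{l,l+1}$ when $m<i'$ and $\bmu_{i'}-\bmu_m=\sum_{l=i'}^{m-1}\bmu_{l,l+1}$ when $m>i'$. Swapping the order of summation and using $\sum_{m\le l}\varepsilon_m=\varepsilon_{(l)}$ and $\sum_{m>l}\varepsilon_m=1-\varepsilon_{(l)}$ yields
\begin{equation*}
\sum_{m=1}^k\varepsilon_m(\bmu_{i'}-\bmu_m)=-\sum_{l=1}^{i'-1}\varepsilon_{(l)}\bmu_{l,l+1}+\sum_{l=i'}^{k-1}(1-\varepsilon_{(l)})\bmu_{l,l+1},
\end{equation*}
an expression on which $\bh_i^T$ can be evaluated term by term.

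Next I would identify which of the $\bh_i^T\bmu_{l,l+1}$ terms dominate. Lemma 2 directly supplies $\bh_i^T\bmu_{i,i+1}=\Delta_{i,i+1}^{1/2}\{1+o(1)\}$ and, when $i\ge 2$, $\bh_i^T\bmu_{i-1,i}=-\{(1-\varepsilon_{(i)})/(1-\varepsilon_{(i-1)})\}\Delta_{i,i+1}^{1/2}\{1+o(1)\}$. For $l\le i-2$, the third estimate in Lemma 2 (applied with its $j$ equal to $i$) gives $\bh_i^T\bmu_{l,l+1}=o(\Delta_{i,i+1}^{1/2})$. For $l>i$, Cauchy--Schwarz together with $\Delta_{l,l+1}/\Delta_{i,i+1}\to 0$ from Condition 5 gives $|\bh_i^T\bmu_{l,l+1}|\le\Delta_{l,l+1}^{1/2}=o(\Delta_{i,i+1}^{1/2})$. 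Hence only $l\in\{i-1,i\}$ contribute at order $\Delta_{i,i+1}^{1/2}$, and the problem reduces to combining these two leading terms case by case.

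The three cases then become short algebra. When $i'=i$, the $l=i$ term is in the second sum with coefficient $1-\varepsilon_{(i)}$ and the $l=i-1$ term is in the first sum with coefficient $-\varepsilon_{(i-1)}$; their combination simplifies to $(1-\varepsilon_{(i)})/(1-\varepsilon_{(i-1)})\cdot\Delta_{i,i+1}^{1/2}\{1+o(1)\}$. When $i'>i$, both dominant terms sit in the first sum and the numerator $-\varepsilon_{(i)}(1-\varepsilon_{(i-1)})+\varepsilon_{(i-1)}(1-\varepsilon_{(i)})=-\varepsilon_i$ gives $-\varepsilon_i/(1-\varepsilon_{(i-1)})\cdot\Delta_{i,i+1}^{1/2}\{1+o(1)\}$. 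When $i'<i$, both dominant terms sit in the second sum with coefficients $1-\varepsilon_{(i)}$ and $1-\varepsilon_{(i-1)}$; the factor $1-\varepsilon_{(i-1)}$ cancels the denominator in $\bh_i^T\bmu_{i-1,i}$, and the two leading pieces cancel exactly, leaving a remainder of size $o(\Delta_{i,i+1}^{1/2})$. Dividing each expression by $\lambda_i^{1/2}\sim\Delta_{i,i+1}^{1/2}\sqrt{\varepsilon_i(1-\varepsilon_{(i)})/(1-\varepsilon_{(i-1)})}$, which is supplied by Lemma 2, yields the three limits in the stated form.

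The main difficulty is not conceptual but bookkeeping: tracking which of the three regimes ($l\le i-2$, $l\in\{i-1,i\}$, $l>i$) each summand falls into under each of the three cases $i'<i$, $i'=i$, $i'>i$, and checking that the exact cancellation in the case $i'<i$ really occurs. That cancellation is worth highlighting, since it is the place where the lemma is most delicate: it encodes the fact that $\bh_i$ carries almost no weight on the differences $\bmu_m-\bmu_{m'}$ with $m,m'\le i$, so the weighted centered mean $\sum_m\varepsilon_m(\bmu_{i'}-\bmu_m)$ with $i'<i$ is asymptotically orthogonal to $\bh_i$, and this orthogonality is precisely what later allows the $i$-th PC score in Theorem 2 to vanish on populations $\Pi_1,\ldots,\Pi_{i-1}$.
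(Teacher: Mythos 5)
Your proposal is correct and follows essentially the same route as the paper: the telescoping identity you derive is exactly the paper's display (9), and the term-by-term evaluation via Lemma 2 (with the $l=i-1$ and $l=i$ terms dominating, the exact cancellation when $i'<i$, and division by $\lambda_i^{1/2}\sim\{\varepsilon_i(1-\varepsilon_{(i)})/(1-\varepsilon_{(i-1)})\}^{1/2}\Delta_{i,i+1}^{1/2}$) is the paper's argument, just written out with more explicit bookkeeping. The only cosmetic difference is that you bound the $l>i$ terms by Cauchy--Schwarz and $\Delta_{l,l+1}/\Delta_{i,i+1}\to 0$, which is a valid (and slightly cruder) substitute for the $o(1)$ estimates the paper absorbs into Lemma 2.
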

\begin{proof}
We write that 
\begin{align}
\sum_{m=1}^k\varepsilon_m(\bmu_1-\bmu_m)=\sum_{m=1}^{k-1}(1-\varepsilon_{(m)})\bmu_{m,m+1},\quad \sum_{m=1}^k\varepsilon_m(\bmu_k-\bmu_m)=-\sum_{m=1}^{k-1}\varepsilon_{(m)}\bmu_{m,m+1}\notag \\
\mbox{and}\quad \sum_{m=1}^k\varepsilon_m(\bmu_i-\bmu_m)=\sum_{m=i}^{k-1}(1-\varepsilon_{(m)})\bmu_{m,m+1}-\sum_{m=1}^{i-1}\varepsilon_{(m)}\bmu_{m,m+1}\quad \mbox{for $i=2,...,k-1$}. \label{9} 
\end{align}
By using Lemma~\ref{lem2}, under Conditions 1 and 5, we have that as $d\to \infty$
\begin{align*}
\bh_1^T\sum_{m=1}^k\frac{\varepsilon_m(\bmu_1-\bmu_m)}{\Delta_{1,2}^{1/2}}&=\bh_1^T\frac{(1-\varepsilon_{(1)})\bmu_{1,2}}{\Delta_{1,2}^{1/2}}+o(1)
=1-\varepsilon_{(1)}+o(1) \quad \mbox{and} \\
\bh_1^T\sum_{m=1}^k\frac{\varepsilon_m(\bmu_{i'}-\bmu_m)}{\Delta_{i,i+1}^{1/2}}&=
-\bh_1^T\frac{\varepsilon_{(1)}\bmu_{1,2}}{\Delta_{1,2}^{1/2}}+o(1)
=-\varepsilon_{(1)}+o(1)\quad \mbox{for $i'=2,...,k$}
\end{align*}
from (\ref{9}).
Also, by using Lemma~\ref{lem2}, under Conditions 1 and 5, we have that for $i=2,...,k-1;\ i'=i+1,...,k;\ i''=1,...,i-1$
\begin{align*}
\bh_i^T\sum_{m=1}^k\frac{\varepsilon_m(\bmu_i-\bmu_m)}{\Delta_{i,i+1}^{1/2}}&=\bh_i^T\frac{(1-\varepsilon_{(i)})\bmu_{i,i+1}-\varepsilon_{(i-1)}\bmu_{i-1,i}}{\Delta_{i,i+1}^{1/2}}+o(1)\\
&=(1-\varepsilon_{(i)})+\frac{\varepsilon_{(i-1)}(1-\varepsilon_{(i)})}{1-\varepsilon_{(i-1)}}+o(1)=\frac{1-\varepsilon_{(i)}}{1-\varepsilon_{(i-1)}}+o(1),\\
\bh_i^T\sum_{m=1}^k\frac{\varepsilon_m(\bmu_{i'}-\bmu_m)}{\Delta_{i,i+1}^{1/2}}&=-\varepsilon_{(i)}+\frac{\varepsilon_{(i-1)}(1-\varepsilon_{(i)})}{1-\varepsilon_{(i-1)}}+o(1)
=-\frac{\varepsilon_{i}}{1-\varepsilon_{(i-1)}}+o(1) \\
\mbox{and}\quad \bh_i^T\sum_{m=1}^k\frac{\varepsilon_m(\bmu_{i''}-\bmu_m)}{\Delta_{i,i+1}^{1/2}}&=o(1). 
\end{align*}
Thus, from Lemma~\ref{lem2}, we can conclude the results. 
\end{proof}

\begin{lemma}
\label{lem4}
Assume Conditions 2 to 6.
Then, under the condition: 
\begin{equation}
\plim_{d\to \infty} \frac{\tilde{\lambda}_{i}}{\Delta_{i,i+1}}=c_i\in (0,\infty)\quad \mbox{for $i=1,...,k-1$},
\label{10}
\end{equation}
it holds that 
$$
\plim_{d\to \infty} \hat{\bu}_i^T \tilde{\bu}_i=1\quad \mbox{for $\hat{\bu}_i^T \tilde{\bu}_i \ge 0,\ i=1,...,k-1$}.
$$ 
\end{lemma}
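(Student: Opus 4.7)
The plan is to approximate $(n-1)\bS_D$ by a matrix whose top $k-1$ eigenvectors are precisely $\tilde{\bu}_1,\ldots,\tilde{\bu}_{k-1}$, and then invoke the Davis--Kahan $\sin\theta$ theorem. Let $\bM=(\bmu_{(1)},\ldots,\bmu_{(n)})$ and $\bE=\bX-\bM$. Since the columnwise average of $\bM$ equals $\sum_i\eta_i\bmu_i$, one checks $\bM\bP_n=\bV$, and therefore
\[
(n-1)\bS_D=\bP_n\bX^T\bX\bP_n=\bV^T\bV+\bV^T\bE\bP_n+\bP_n\bE^T\bV+\bP_n\bE^T\bE\bP_n.
\]
I would then condition on the class labels and, via Chebyshev-style estimates, show that $\bV^T\bE\bP_n$ and its transpose are $o_p(\Delta_{\min})$ in operator norm while $\bP_n\bE^T\bE\bP_n$ equals $\tr(\bSigma_1)\bP_n$ up to an $o_p(\Delta_{\min})$ remainder.

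For $\bP_n\bE^T\bE\bP_n$, the same Chebyshev argument as in the proof of Lemma~\ref{lem1} gives $\bE^T\bE=\tr(\bSigma_1)\bI_n+o_p(\Delta_{\min})$: off-diagonal entries use $\tr(\bSigma_i\bSigma_{i'})\le\tr(\bSigma_i^2)^{1/2}\tr(\bSigma_{i'}^2)^{1/2}=o(\Delta_{\min}^2)$ under Condition 2, diagonal entries concentrate around $\tr(\bSigma_{(j)})$ by Condition 3, and Condition 4 then allows $\tr(\bSigma_{(j)})$ to be replaced by $\tr(\bSigma_1)$ uniformly in $j$. For the cross term, each entry of $\bV^T\bE\bP_n$ has conditional mean zero and conditional variance $\bnu_{(i)}^T\bSigma_{(j)}\bnu_{(i)}$. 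Writing $\bnu_i$ explicitly as a fixed linear combination of the consecutive-difference vectors $\bmu_{l,l+1}$ and applying Cauchy--Schwarz, the task reduces to bounding $\bmu_{l,l+1}^T\bSigma_j\bmu_{l,l+1}$: Condition 6 handles $l\le k-2$, and for $l=k-1$ one combines $\bmu_{k-1,k}^T\bSigma_j\bmu_{k-1,k}\le\Delta_{k-1,k}\lambda_{j,1}$ with $\lambda_{j,1}\le\tr(\bSigma_j^2)^{1/2}=o(\Delta_{\min})$ from Condition 2 and $\Delta_{k-1,k}\sim\Delta_{\min}$ from Condition 5. Together these yield
\[
(n-1)\bS_D=\bA_0+o_p(\Delta_{\min}),\qquad \bA_0:=\bV^T\bV+\tr(\bSigma_1)\bP_n.
\]

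Next I would read off the eigenstructure of $\bA_0$. Since $\bV\bone_n=\bze$, the subspace $\mbox{range}(\bV^T)$ lies in $\{\bone_n\}^\perp$, so each $\tilde{\bu}_i$ with $\tilde{\lambda}_i>0$ satisfies $\bP_n\tilde{\bu}_i=\tilde{\bu}_i$ and hence $\bA_0\tilde{\bu}_i=(n\tilde{\lambda}_i+\tr(\bSigma_1))\tilde{\bu}_i$; the remaining part of $\{\bone_n\}^\perp$ is a $\tr(\bSigma_1)$-eigenspace, and $\bone_n$ carries eigenvalue $0$. Assumption (\ref{10}) combined with Condition 5 yields $\tilde{\lambda}_{i+1}/\tilde{\lambda}_i\to 0$ in probability, so the first $k-1$ eigengaps of $\bA_0$ are each of order at least $\Delta_{\min}$ in probability. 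Since $(n-1)\bS_D-\bA_0$ is $o_p(\Delta_{\min})$, the Davis--Kahan $\sin\theta$ theorem --- applied on the event where the relevant eigenvalues of $\bA_0$ are well separated, which has probability tending to $1$ --- yields $\mbox{Angle}(\hat{\bu}_i,\tilde{\bu}_i)\to 0$ in probability for $i=1,\ldots,k-1$. The sign convention $\hat{\bu}_i^T\tilde{\bu}_i\ge 0$ then gives $\plim_{d\to\infty}\hat{\bu}_i^T\tilde{\bu}_i=1$.

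The hardest step is the sharp control of $\bV^T\bE\bP_n$. Condition 6 only directly bounds $\bmu_{i,i+1}^T\bSigma_j\bmu_{i,i+1}$ for $i\le k-2$, so the contribution involving $\bmu_{k-1,k}$ must be squeezed to the correct $o(\Delta_{\min}^2)$ rate using Condition 2 together with $\|\bmu_{k-1,k}\|^2=\Delta_{k-1,k}\sim\Delta_{\min}$. Matching every noise piece to the finest scale $\Delta_{\min}$, rather than to the larger $\Delta_{1,2}$, is what allows the perturbation bound to succeed simultaneously for all $i\in\{1,\ldots,k-1\}$ despite the eigengaps of $\bA_0$ themselves spanning orders of magnitude that diverge with $d$.
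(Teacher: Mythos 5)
Your proposal is correct and follows essentially the same route as the paper: the same decomposition of $(n-1)\bS_D$ into the signal $\bV^T\bV$, the isotropic part $\tr(\bSigma_1)\bP_n$, and a remainder controlled entrywise by Chebyshev bounds, with exactly the paper's trick of handling the $\bmu_{k-1,k}$ direction via $\lambda_{j1}\le\tr(\bSigma_j^2)^{1/2}$ from Condition 2 together with $\Delta_{k-1,k}/\Delta_{\min}\to 1$ from Condition 5. The only difference is cosmetic: you package the final eigenvector-perturbation step as Davis--Kahan, whereas the paper argues via quadratic forms over $\{\bone_n\}^\perp$ and the asymptotic distinctness of the $\tilde{\lambda}_i$'s guaranteed by (\ref{10}) and Condition 5.
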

\begin{proof}
We have that $\var\{\bmu_{i,i+1}^T(\bx_j-\bmu_{i'})| \bx_j \in \Pi_{i'}\}=\bmu_{i,i+1}^T \bSigma_{i'} \bmu_{i,i+1}=o(\Delta_{k-1,k}^2)$ as $d\to \infty$ for $j=1,...,n;$ $i=1,...,k-2;$ $i'=1,...,k$, under Condition 6. 
Also, from the fact that $\lambda_{i1}\le  \tr(\bSigma_{i}^2)^{1/2}$, we have that $\var\{\bmu_{k-1,k}^T(\bx_j-\bmu_{i})| \bx_j \in \Pi_{i}\}=\bmu_{k-1,k}^T \bSigma_{i}\bmu_{k-1,k}\le \lambda_{i1} \Delta_{k-1,k}=o(\Delta_{k-1,k}^2)$ for $j=1,...,n;$ $i=1,...,k,$ under Condition 2.
Then, similar to (\ref{1}), under Conditions 2 and 6, it holds that $\bmu_{i,i+1}^T(\bx_j-\bmu_{i'})/\Delta_{k-1,k}=o_P(1)$ when $\bx_j \in \Pi_{i'}$ for $j=1,...,n;\ i=1,...,k-1;\ i'=1,...,k$. 
In addition, under Conditions 2 and 3, we can claim that $(\bx_j-\bmu_{i})^T(\bx_{j'}-\bmu_{i'})/\Delta_{k-1,k}=o_P(1)$ and $||\bx_j-\bmu_{i}||^2/\Delta_{k-1,k}=\tr(\bSigma_i)/\Delta_{k-1,k}+o_P(1)$ when $\bx_j \in \Pi_{i}$ and $\bx_{j'} \in \Pi_{i'}$ for all $j\neq j'$ and $i,i'=1,...,k$. 
Here, we write that $\bx_j-\bmu_{\eta}=(\bx_j-\bmu_i)+\bnu_{i}$ for $j=1,...,n$; $i=1,...,k$, where $\bmu_{\eta}=\sum_{i=1}^k\eta_i\bmu_i$.
Then, by noting (\ref{9}) with $\varepsilon_i=\eta_{i}$ and $\varepsilon_{(i)}=\eta_{(i)}$, $i=1,...,k$, under Conditions 2, 3 and 6, we have that 
\begin{align*}
\frac{||\bx_j-\bmu_{\eta}||^2}{\Delta_{k-1,k}}= \frac{||\bnu_{i}||^2+\tr(\bSigma_i)}{\Delta_{k-1,k}}+o_P(1)
\quad \mbox{and} \quad 
\frac{(\bx_j-\bmu_{\eta})^T(\bx_{j'}-\bmu_{\eta})}{\Delta_{k-1,k}}=
\frac{\bnu_{i}^T\bnu_{i'}}{\Delta_{k-1,k}}+o_P(1)
\end{align*}
when $\bx_j \in \Pi_{i}$ and $\bx_{j'} \in \Pi_{i'}$ for all $j\neq j'$ and $i,i'=1,...,k$. 
Thus, under Conditions 2, 3, 4 and 6, it holds that 
\begin{equation}
\plim_{d\to \infty}\frac{(\bX-\bmu_{\eta} \bone_n^T)^T(\bX-\bmu_{\eta} \bone_n^T)-\tr(\bSigma_1)\bI_n-\bV^T \bV}{\Delta_{k-1,k}}=\bO.
\label{11} 
\end{equation}
Let $\be_{n*}\ (\in \mathbb{R}^n)$ be an arbitrary random unit vector such that $\be_{n*}^T\bone_n=0$. 
We note that $\bP_n(\bX-\bmu_\eta \bone_n^T)^T(\bX-\bmu_\eta \bone_n^T) \bP_n/(n-1)=\bS_D$. 
Then, by noting $\be_{n*}^T\bP_n=\be_{n*}^T$, under (\ref{10}), Conditions 2, 3, 4 and 6, we have that 
\begin{align}
\be_{n*}^T\frac{(n-1) \bS_D-\tr(\bSigma_1)\bI_n}{\Delta_{k-1,k}}\be_{n*}&=\be_{n*}^T\frac{(\bX-\bmu_\eta \bone_n^T)^T(\bX-\bmu_\eta \bone_n^T)-\tr(\bSigma_1)\bI_n}{\Delta_{k-1,k}}\be_{n*} \notag \\
&=\be_{n*}^T\frac{\bV^T \bV}{\Delta_{k-1,k}}\be_{n*}+o_P(1)=\be_{n*}^T\frac{\sum_{i=1}^{k-1}n\tilde{\lambda}_i\tilde{\bu}_i\tilde{\bu}_i^T}{\Delta_{k-1,k}}\be_{n*}+o_P(1) \notag \\
&=\be_{n*}^T\frac{\sum_{i=1}^{n-1}\{(n-1)\hat{\lambda}_i-\tr(\bSigma_1)\}\hat{\bu}_i\hat{\bu}_i^T  }{\Delta_{k-1,k}}\be_{n*}
\label{12}
\end{align}
from (\ref{11}).
We note that $\tilde{\bu}_{i}^T \bone_n=0$ for $i=1,...,k-1$ in case of rank$(\bV)=k-1$. 
Also, we note that $\tilde{\lambda}_{i},\ i=1,...,k-1$, are distinct under Condition 5 and (\ref{10}) for a sufficiently large $d$. 
Thus, if $\hat{\bu}_i^T \tilde{\bu}_i \ge 0$ for $i=1,...,k-1$, we have that $\hat{\bu}_i^T \tilde{\bu}_i=1+o_P(1)$ for $i=1,...,k-1$.
It concludes the result. 
\end{proof}
\begin{lemma}
\label{lem5}
Assume Condition 5. 
For $n_i>0,\ i=1,...,k$, it holds that for $i=1,...,k-1$
$$
\plim_{d\to \infty}\frac{\tilde{\lambda}_{i}}{\Delta_{i,i+1}}=\frac{\eta_{i}(1-\eta_{(i)})}{1-\eta_{(i-1)}} \quad 
\mbox{and}\quad \plim_{d\to \infty}\tilde{\bu}_i^T{\bu}_i=1.
$$
\end{lemma}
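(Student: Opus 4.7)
The plan is to reduce Lemma \ref{lem5} to a deterministic (in $d$) analog of Lemma \ref{lem2} applied to a ``signal only'' matrix, and then read off $\tilde{\bu}_i$ by following the $\bv$-to-$\bu$ passage used already for $\bS_D$.

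First I would pass from $\bV^T\bV/n$ to the companion $d\times d$ matrix. Since $\bV$ has at most $k$ distinct columns, direct calculation gives
\begin{align*}
\frac{\bV\bV^T}{n}=\sum_{i=1}^k \eta_i\,\bnu_i\bnu_i^T=\sum_{i<j}\eta_i\eta_j\,\bmu_{i,j}\bmu_{i,j}^T,
\end{align*}
where for the second equality I would use the identity $\sum_i \eta_i(\bmu_i-\bmu_\eta)(\bmu_i-\bmu_\eta)^T=\sum_{i<j}\eta_i\eta_j\bmu_{i,j}\bmu_{i,j}^T$ with $\bmu_\eta=\sum_m\eta_m\bmu_m$. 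The nonzero eigenvalues of $\bV^T\bV/n$ coincide with those of $\bV\bV^T/n$.

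Second, observe that $\bV\bV^T/n$ has \emph{exactly} the structure of the ``signal part'' of $\bSigma$ in Lemma \ref{lem2}, with $\varepsilon_i$ replaced by $\eta_i$, and without any noise term $\sum_m\varepsilon_m\bSigma_m$. I would therefore rerun the algebra of the proof of Lemma \ref{lem2} verbatim, with $\bSigma$ replaced by $\bV\bV^T/n$, $\lambda_i$ replaced by $\tilde{\lambda}_i$, and $\bh_i$ replaced by the unit eigenvector $\bv_i$ of $\bV\bV^T/n$ associated with $\tilde{\lambda}_i$. Condition 1 is no longer needed (it only served to kill the $\sum_m\varepsilon_m\bSigma_m$ contribution), so Condition 5 alone suffices to conclude
\begin{align*}
\frac{\tilde{\lambda}_i}{\Delta_{i,i+1}}\to \frac{\eta_i(1-\eta_{(i)})}{1-\eta_{(i-1)}},\qquad \mbox{Angle}(\bv_i,\bmu_{i,i+1})\to 0,\qquad i=1,\dots,k-1,
\end{align*}
where the angle statement, together with the perturbation expressions for $\bv_i^T\acute{\bmu}_{i-1,i}$ and $\bv_i^T\acute{\bmu}_{j,j+1}$ for $j>i$, is inherited from Lemma \ref{lem2}. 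This gives the first conclusion of the lemma, and no probability is actually involved in this step, since $\bV\bV^T/n$ is deterministic once the class labels are fixed.

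Third, I would pass from $\bv_i$ to $\tilde{\bu}_i$ via the standard SVD correspondence $\tilde{\bu}_i=\bV^T\bv_i/\sqrt{n\tilde{\lambda}_i}$ (well defined since $\tilde{\lambda}_i>0$ for large $d$ by Step 2). The $j$-th entry is $\bnu_{(j)}^T\bv_i/\sqrt{n\tilde{\lambda}_i}=\bnu_{i'}^T\bv_i/\sqrt{n\tilde{\lambda}_i}$ whenever $\bx_j\in\Pi_{i'}$. Because $\bnu_{i'}=\sum_m\eta_m(\bmu_{i'}-\bmu_m)$ is precisely the $\eta$-analog of the vector handled in Lemma \ref{lem3}, I would repeat the Lemma \ref{lem3} computation with $\varepsilon_i\mapsto\eta_i$, $\bh_i\mapsto\bv_i$, $\lambda_i\mapsto\tilde{\lambda}_i$, using the expansions from Step 2. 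This yields $\tilde{u}_{ij}\to u_{ij}$ entrywise as $d\to\infty$. Since both $\tilde{\bu}_i$ and $\bu_i$ are unit vectors (a short check on $\|\bu_i\|^2$ using the definition of $u_{ij}$ gives $1$), entrywise convergence on a finite-dimensional unit sphere forces $\tilde{\bu}_i^T\bu_i\to 1$, provided we are on the branch $\tilde{\bu}_i^T\bu_i\ge 0$, which is precisely the convention stated at the start of the Appendix.

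The main obstacle is the bookkeeping in Step 2: one must check that the chain of perturbation identities in the proof of Lemma \ref{lem2}, which uses Condition 5 at each level of the recursion in $i$, survives intact after the noise term is dropped and the proportions are switched to $\eta$. Everything else is essentially a transcription, and the first convergence in the lemma is in fact a non-random limit that follows purely from the algebraic structure of $\bV\bV^T/n$ together with Condition 5.
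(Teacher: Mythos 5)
Your proposal is correct and follows essentially the same route as the paper: the paper likewise passes to the companion matrix $\bV\bV^T/n$, observes via the identity behind (\ref{9}) that it has exactly the form (\ref{3}) with $\varepsilon_{(i)}$ replaced by $\eta_{(i)}$ (and no noise term, so Condition 5 alone suffices), reruns Lemmas \ref{lem2}--\ref{lem3} with $\eta$'s, and recovers $\tilde{\bu}_i$ through $\tilde{\bu}_i=\bV^T\tilde{\bh}_i/(n\tilde{\lambda}_i)^{1/2}$. Your entrywise-convergence-of-unit-vectors step and the sign-convention remark match the paper's treatment.
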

\begin{proof}
By noting (\ref{9}) with $\varepsilon_i=\eta_{i}$ and $\varepsilon_{(i)}=\eta_{(i)}$, $i=1,...,k$, we can write that 
\begin{align}
\frac{\bV \bV^T}{n}=&\sum_{i=1}^{k-1}\eta_{(i)}(1-\eta_{(i)})\bmu_{i,i+1} \bmu_{i,i+1}^T\notag \\
&+\sum_{i=1}^{k-2}\sum_{j=i+1}^{k-1} \eta_{(i)}(1-\eta_{(j)})(\bmu_{i,i+1}\bmu_{j,j+1}+\bmu_{j,j+1}\bmu_{i,i+1}).
\label{13}
\end{align}
We have the eigen-decomposition of $\bV \bV^T/n$ by $\bV \bV^T/n=\sum_{i=1}^{k-1}\tilde{\lambda}_{i}\tilde{\bh}_{i}\tilde{\bh}_{i}^T$, where $\tilde{\bh}_{i}$ is a unit eigenvector corresponding to $\tilde{\lambda}_{i}$ for each $i$. 
We note that $\eta_i>0,\ i=1,...,k$ for $n_i>0,\ i=1,...,k$. 
Then, by noting Lemmas~\ref{lem2}-\ref{lem3} and the fact that (\ref{13}) is same as (\ref{3}) with $\varepsilon_{(i)}=\eta_{(i)},\ i=1,...,k-1$, under Condition 5, we have that for $i=1,...,k-1$
$$
\plim_{d\to \infty}\frac{\tilde{\lambda}_{i}}{\Delta_{i,i+1}}=\frac{\eta_{i}(1-\eta_{(i)})}{1-\eta_{(i-1)}}  \quad 
\mbox{and}\quad 
\plim_{d\to \infty} \frac{ \tilde{\bh}_i^T\bnu_{(j)}}{\tilde{\lambda}_i^{1/2}}=u_{ij}n^{1/2}
$$
if $\tilde{\bh}_i^T\bmu_{i,i+1}\ge 0$.
We note that $\tilde{u}_{ij}=\tilde{\bh}_i^T\bnu_{(j)}/(n\tilde{\lambda}_i)^{1/2}$ from the fact that $\tilde{\bu}_{i}=\bV^T\tilde{\bh}_{i}/(n\tilde{\lambda}_{i})^{1/2}$ for $i=1,...,k-1$. 
Hence, we can conclude the result. 
\end{proof}
\subsection{Proofs of the theorems, corollaries and proposition}
\subsubsection{Proofs of Theorem~1 and Corollary~1}
We note that $\tr(\bSigma_1)/\tr(\bSigma)\to (1-\varepsilon_1\varepsilon_2c )$ as $d\to \infty$ under Condition 4 and $\Delta_{1,2} /\tr(\bSigma)\to c \ (> 0)$ as $d\to \infty$. 
Then, by using Lemma~\ref{lem1}, we can conclude the result of Theorem~1. 

Next, we consider the proof of Corollary~1. 
From the fact that $\bone_n^T\bS_D\bone_n=0$, it holds that $\hat{\bu}_1^T \bone_n=0$ when $\bS_D\neq \bO$, so that $\bP_n\hat{\bu}_1=\hat{\bu}_1$. 
Also, note that $||\br ||^2=n\eta_1 \eta_2$. 
Then, by using Lemma~\ref{lem1}, under Conditions 2 to 4, it holds that $\hat{\bu}_1^T\{(n-1)\bS_D-\tr(\bSigma_1)\bP_n\}\hat{\bu}_1/\Delta_{1,2}= n\eta_1 \eta_2+o_P(1)$ as $d\to \infty$.
Hence, from (3) and the assumption that $\hat{\bu}_1^T \bz_1 \ge 0$, we have that $\hat{\bu}_1^T \{(n\eta_1 \eta_2)^{-1/2}\br\}=1+o_P(1)$ as $d\to \infty$ for $n_i>0$, $i=1,2$. 
In view of the elements of $\br$, we can conclude the result of Corollary~1. 
\subsubsection{Proofs of Theorem~2 and Corollary~2}
We write that $\bx_j-\bmu=(\bx_j-\bmu_i)+\sum_{m=1}^k\varepsilon_m(\bmu_i-\bmu_m)$ for $j=1,...,n$; $i=1,...,k$. 
We note that $\var\{\be_d^T(\bx_j-\bmu_i)/\Delta_{\min}^{1/2}|\bx_j \in \Pi_i\}=\be_d^T\bSigma_i \be_d/\Delta_{\min}\le \lambda_{i1}/\Delta_{\min}=o(1)$ as $d\to \infty$ under Condition 1 for 
$j=1,...,n;\ i=1,...,k$, where $\be_{d}\ (\in \mathbb{R}^d)$ is an arbitrary unit vector. 
Then, under Condition 1, when $\bx_j \in \Pi_i$, it holds that as $d\to \infty$
$$
\frac{\be_d^T(\bx_j-\bmu)}{\Delta_{\min}^{1/2}}=\frac{\be_d^T\{\sum_{m=1}^k \varepsilon_m(\bmu_i-\bmu_m)\}}{\Delta_{\min}^{1/2}}+o_P(1).
$$
Then, by using Lemmas~\ref{lem2} and \ref{lem3}, we can conclude the result of Theorem~2.

For the proof of Corollary~2, from Lemma~\ref{lem2}, the results are obtained straightforwardly.
\subsubsection{Proof of Theorem~3}
By combining Lemmas~\ref{lem4} and \ref{lem5}, from Theorem 2 and the assumption that $\hat{\bu}_{i}^T\bz_i \ge 0$ for all $i$, the result is obtained straightforwardly. 
\subsubsection{Proof of Proposition 1}
Let $\bSigma_{(*)}=\varepsilon_1 \bSigma_1+\varepsilon_2 \bSigma_2$. 
Then, we define the eigen-decomposition of $\bSigma_{(*)}$ by $\bSigma_{(*)}=\sum_{i=1}^{d}{\lambda}_{i(*)}{\bh}_{i(*)}{\bh}_{i(*)}^T $, where $\lambda_{1(*)}\ge\cdots\ge {\lambda}_{d (*)}\ge 0$ are eigenvalues of $\bSigma_{(*)}$ and ${\bh}_{i(*)}$ is a unit eigenvector corresponding to ${\lambda}_{i(*)}$ for each $i$. 
Let $\lambda=\varepsilon_1 \varepsilon_{2} \Delta_{1,2}$. 
Then, from $\bSigma=\lambda \acute{\bmu}_{1,2} \acute{\bmu}_{1,2}^T+\bSigma_{(*)}$, under $\max_{i=1,2}\acute{\bmu}_{1,2}^T \bSigma_i \acute{\bmu}_{1,2}/\Delta_{1,2} \to 0$ as $d\to \infty$, it holds that $\acute{\bmu}_{1,2}^T \bSigma \acute{\bmu}_{1,2}/\lambda \to 1$ as $d\to \infty$, so that 
\begin{equation}
\sum_{i=1}^d \frac{\lambda_{i(*)} (\bh_{i(*)}^T \acute{\bmu}_{1,2})^2}{\lambda}=o(1),
\label{14}
\end{equation} 
where $\acute{\bmu}_{1,2}={\bmu}_{1,2}/\Delta_{1,2}^{1/2}$. 
Let $\kappa(i)=\lambda_{i(*)}-\lambda$ for $i=1,...,d$. 
For a sufficiently large $d$, when $\kappa(1)>0$, there exists some positive integer $i_*$ such that $i_*=\max \{i|\kappa(i)>0\ \mbox{for $i=1,...,d$}\}$. 
Then, from (\ref{14}), we have that $\sum_{i=1}^{i_*}(\bh_{i(*)}^T \acute{\bmu}_{1,2})^2=o(1)$, so that $\lambda_{i_\star}/\lambda=1+o(1)$ with $i_{\star}=i_*+1$. 
When $\kappa(1)\le 0$ for a sufficiently large $d$, 
it holds that $\lambda_{i_\star}/\lambda=1+o(1)$ with $i_{\star}=1$.
In addition, under $ \liminf_{d\to \infty} |{\lambda_{i'}}/{\lambda_{i_{\star}}}-1 |>0$ for $i'=1,...,d\ (i'\neq i_{\star})$, it holds that $\bh_{i_{\star}}^T \bmu_{1,2}= 1+o(1)$ from $\bh_{i_{\star}}^T \bmu_{1,2}\ge 0$. 
Then, from the fact that $\bh_{i_{\star}}^T\bSigma_{i} \bh_{i_{\star}}/\lambda \to 0$ as $d\to \infty$ for $i=1,2$, in a way similar to (\ref{1}), we have that 
$$
\frac{ s_{i_{\star}j}}{\lambda_{i_{\star}}^{1/2}}=\frac{\bh_{i_{\star}}^T(\bx_j-\bmu)}{\lambda_{i_{\star}}^{1/2} }
=\frac{ \bh_{i_{\star}}^T(\bmu_i-\bmu)}{\lambda_{i_{\star}}^{1/2} }+o_P(1)
$$
when $\bx_j \in \Pi_i$ for $j=1,...,n;\ i=1,2$. 
We can conclude the results.

\end{document}